\newtheorem{propo}{Proposition}[section]
\newtheorem{theor}[propo]{Theorem}
\newtheorem{lemma}[propo]{Lemma}
\newtheorem{conje}[propo]{Conjecture}
\newtheorem{algor}[propo]{Algorithm}
\theoremstyle{definition}
\newtheorem{defin}[propo]{Definition}
\newtheorem{examp}[propo]{Example}
\theoremstyle{remark}
\newtheorem{remar}[propo]{Remark}
\numberwithin{equation}{section}
\newcommand{\algo}[6]
{\vspace{11pt}\addtocounter{propo}{1}
\begin{algor}{{\bf #1}{\rm (#2)}}\label{#1}\end{algor}
\vspace{-6pt}\noindent{\it #3}.\\
{\bf Input:} #4\\
{\bf Output:} #5\\
\newcounter{#1}
\begin{list}{\textbf{\arabic{#1}.}}{\usecounter{#1}}
#6\end{list}\vspace{3pt}}
\newcommand{\NN }{\mathbb{N}}
\newcommand{\RR }{\mathbb{R}}
\newcommand{\PP }{\mathbb{P}}
\newcommand{\id }{\mathrm{id}}
\newcommand{\Ac }{\mathcal{A}}
\newcommand{\Dc }{\mathcal{D}}
\DeclareMathOperator{\Aut}{Aut}
\DeclareMathOperator{\Alt}{Alt}
\newcommand{\Spairs }{\Psi}
\title[Simplicial arrangements with up to 27 lines]
{Simplicial arrangements\\ with up to 27 lines}
\author{M.~Cuntz}
\address{Michael Cuntz,
Fachbereich Mathematik,
Universit\"at Kai\-sers\-lau\-tern,
Postfach 3049,
D-67653 Kaiserslautern, Germany}
\email{cuntz@mathematik.uni-kl.de}
\begin{document}

\begin{abstract}
We compute all isomorphism classes of simplicial arrangements in the
real projective plane with up to $27$ lines.
It turns out that Gr\"unbaums catalogue is complete up to $27$
lines except for four new arrangements with $22$, $23$, $24$, $25$ lines, respectively.
As a byproduct we classify simplicial arrangements of pseudolines
with up to $27$ lines. In particular, we disprove Gr\"unbaums conjecture
about unstretchable arrangements with at most $16$ lines,
and prove the conjecture that any simplicial arrangement with at most
$14$ pseudolines is stretchable.
\keywords{arrangement of hyperplanes \and simplicial \and pseudoline \and wiring}
\end{abstract}

\maketitle

\section{Introduction}

A simplicial arrangement is a finite set $\Ac=\{H_1,\ldots,H_n\}$ of
hyperplanes in $\RR^r$ such that the connected components of
$\RR^r\backslash\bigcup_{H\in\Ac}H$ are simplicial cones (compare \cite[Def.\ 5.14]{OT}).
For $r=3$, it may be visualized as a triangulation of
the plane by lines (for example Fig.\ \ref{a25}).
The classification of simplicial arrangements is
still an open problem. There is an impressive catalogue
\cite{p-G-09} of the known simplicial arrangements in dimension three: the
many sporadic cases are probably the main reason why a classification
appears to be so difficult.

But let us assume that we have some theorem stating that a given
catalogue is complete. Then it would be desirable to find a short proof.
However, it is possible that a shortest proof of such a theorem is very long,
so long that it would take several thousand pages to print it.
In such a situation we nowadays take advantage of a computer:
if the proof consists of a case-by-case
analysis, i.e.\ we have to comb through a large tree with branches mostly
leading to contradictions, then a computer is exactly the right tool to use.

In a previous work \cite{p-CH09c}, together with Heckenberger we have
classified the large class of \emph{crystallographic arrangements}
(see \cite{p-C10} for a definition).
The method used there is an algorithm which enumerates them all.
This algorithm terminates because it exploits several strong theorems.
But still, a computer has to check billions of branches of
the enumerating tree. We do not claim that a short proof does not exist,
but any other proof would need to address the large number
of sporadic arrangements arising here.

The algorithm we propose does not have polynomial runtime in the number
of lines of the arrangement. However, since it is conjectured that the largest
sporadic simplicial arrangement has $37$ lines, one idea for a classification could
be to determine all the simplicial arrangement with up to $37$ lines, and then prove
in some other way that any simplicial arrangement with more than
$37$ lines belongs to one of the infinite series.
With this plan in mind, we are talking about algorithms with constant runtime,
and thus we ``only'' need to make this constant small enough.

We should also note here that although we concentrate on the case
of dimension three (or the projective plane), I believe that one
can deduce a complete classification for all dimensions based upon a
classification in dimension three. This was the case for the crystallographic
arrangements mentioned above, see \cite{p-CH10}.

The most important result of our computation is:

\begin{theor}
We have a complete list of all simplicial arrangements in the real
projective plane with at most $27$ lines.
\end{theor}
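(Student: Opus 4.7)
The plan is to reduce the problem to a finite combinatorial enumeration and then to a stretchability test. First, I would pass from straight-line arrangements in $\RR\PP^2$ to pseudoline arrangements (conveniently encoded as wiring diagrams): every simplicial line arrangement is in particular a simplicial pseudoline arrangement, so it suffices to enumerate all isomorphism classes of simplicial pseudoline arrangements with $n\le 27$ pseudolines and afterwards decide, for each of them, whether it is stretchable. Pseudoline arrangements have a purely combinatorial description, which makes enumeration and isomorphism testing amenable to a computer.

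The enumeration itself I would organise as an incremental tree search: starting from the unique simplicial arrangement with a small number of pseudolines, repeatedly try to insert one more pseudoline so that the resulting arrangement is still simplicial. Adding a pseudoline cuts some triangles into a triangle and a quadrilateral; hence the simpliciality condition forces the new pseudoline to pass through further intersection points or to be compatible with subsequent insertions, giving very strong local constraints. The algorithm keeps, at each level $k\le 27$, one representative of every isomorphism class of simplicial arrangement with $k$ pseudolines (using a canonical form on the wiring diagram so that no isomorphism class is visited twice), and branches only over those insertions that are compatible with at least one completion to a simplicial arrangement. Correctness of the search rests on a structural lemma — which must be proved separately — saying that every simplicial arrangement with $n$ pseudolines arises from a simplicial arrangement with $n-1$ pseudolines by a single legal insertion (or, if this fails in full generality, that every $n$-arrangement contains a sub-arrangement that is reachable in the search and from which the $n$-arrangement can be recovered).

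Once the combinatorial list is complete, the remaining task is to separate stretchable from unstretchable cases. For each candidate $\Ac$ I would attempt an explicit realisation: fix a projective frame, express every line by its coordinates, translate the incidences of the wiring diagram into polynomial equations over $\QQ$ (or $\RR$), and solve. Either a solution is found, in which case $\Ac$ is stretchable and can be matched against Gr\"unbaum's catalogue, or the system is certifiably infeasible over $\RR$, in which case $\Ac$ is unstretchable (and the incidence obstruction produced serves as a short human-readable proof). Comparison of the stretchable list with Gr\"unbaum's catalogue then yields the four new line arrangements and confirms completeness.

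The hard part will not be the conceptual outline but the computational feasibility: the naive search tree of simplicial pseudoline arrangements with up to $27$ pseudolines is astronomical, and the entire strategy stands or falls with (i) the strength of the pruning criteria that detect, as early as possible, branches that cannot be completed to a simplicial arrangement, and (ii) a sufficiently fast canonical form so that isomorphic wiring diagrams are identified on the fly. Proving that the combination of these two ingredients is complete — i.e.\ that no isomorphism class is ever lost by pruning — is the main structural obstacle; everything else is either standard pseudoline combinatorics or finite-dimensional elimination over $\RR$.
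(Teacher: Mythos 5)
Your overall architecture (reduce to pseudoline arrangements, enumerate combinatorially, then test stretchability) matches the paper's, but your enumeration strategy contains a genuine gap that the paper's method is specifically designed to avoid. You propose to build the list inductively on the number of pseudolines, inserting one pseudoline at a time into an already simplicial arrangement, and you correctly identify that this rests on a structural lemma asserting that every simplicial arrangement with $n$ pseudolines contains a simplicial sub\-arrangement with $n-1$ pseudolines. That lemma is not available and is effectively contradicted by the known structure of the problem: the Hasse diagram of simplicial arrangements under sub\-arrangement containment is sparse and irregular (this is precisely the ``bizarre structure'' the paper comments on), and containments can skip many lines at once --- for instance $\Ac(21,4)$ sits inside $\Ac(26,4)$, a gap of five lines, and nothing guarantees intermediate simplicial stages. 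Your fallback (``every $n$-arrangement contains a \emph{reachable} sub\-arrangement from which it can be recovered'') is circular unless you can bound how many pseudolines must be inserted simultaneously; without such a bound the search degenerates into enumerating all pseudoline arrangements, which is hopeless at $n=27$.

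The paper sidesteps this entirely by fixing $n$ and enumerating \emph{wiring diagrams} (allowable sequences) junction by junction, i.e.\ the tree search is over partial sweeps of a single $n$-line arrangement rather than over arrangements of increasing size. Completeness then follows from the Goodman--Pollack theorem (every pseudoline arrangement is isomorphic to a wiring diagram) together with explicit lemmas showing that the restricted sets of allowable sequences $W'_n$, $W''_n$ and the dihedral-orbit representatives of beginnings still surject onto all isomorphism classes; the simpliciality pruning lemmas only discard fragments that provably admit no simplicial completion, so no class is lost. Isomorphism collection is done a posteriori by graph isomorphism rather than by an on-the-fly canonical form. Your stretchability step is also reversed relative to the paper for feasibility reasons: solving the incidence equations directly for every candidate is far too expensive, so the paper first eliminates almost all unstretchable wirings with a combinatorial Pappus obstruction and runs the Gr\"obner-basis realizability test only on the handful of survivors. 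That part of your plan is repairable; the induction on the number of lines is not.
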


We achieve this by enumerating wiring diagrams (or allowable sequences)
and using the correspondence of Goodman and Pollack to arrangements of pseudolines.
As a byproduct, we find further examples of simplicial arrangements
(of pseudolines) disproving the following two conjectures:

\begin{conje}{(\cite[Conj.\ 2]{p-G-09b})}
Up to isomorphism, there are only $5$ simplicial unstretchable arrangements of
$15$ or $16$ pseudolines.
\end{conje}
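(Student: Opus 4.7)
The plan is to reduce the conjecture to a finite enumeration. By the Goodman--Pollack correspondence already invoked for the main theorem of the excerpt, isomorphism classes of simple pseudoline arrangements on $n$ pseudolines are in bijection with equivalence classes of wiring diagrams (or allowable sequences of permutations) on $n$ strands, and simpliciality is the purely combinatorial condition that every bounded face of the diagram is a triangle. So in principle one can settle the conjecture by generating all simplicial wiring diagrams on $15$ and $16$ strands, testing each underlying arrangement for stretchability, and verifying that exactly $5$ unstretchable isomorphism classes occur.

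For the enumeration I would proceed by an incremental sweep: starting from the already-classified simplicial pseudoline arrangements on $n-1$ strands, insert an $n$-th pseudoline in every combinatorially possible way such that the result is still simplicial, then reduce modulo the isomorphism relation on wiring diagrams to discard duplicates. For $n=15,16$ this should yield a finite and explicit list of candidates. To keep the tree manageable one prunes as early as possible: any partial diagram that already contains a non-triangular bounded cell that can no longer be cut by a later strand is discarded.

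For each surviving candidate one then decides stretchability. This is a real semialgebraic decision problem: set up the Grassmann--Pl\"ucker system whose variables are the homogeneous coordinates of a hypothetical line realization in $\RR\PP^2$ and whose (in)equalities encode the given order type. One then either exhibits an explicit (preferably rational) realization, proving stretchability, or derives a contradiction via a classical incidence obstruction of Pappus or Desargues type, or by a determinantal sign argument on a suitable $3\times 3$ minor. Collating these results splits the list into stretchable and unstretchable parts; the conjecture is confirmed precisely when the unstretchable part consists of exactly $5$ isomorphism classes, which one matches against Gr\"unbaum's known examples.

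The main obstacle will be the size of the enumeration in step one: the number of simple pseudoline arrangements on $16$ strands is very large, so effective canonical forms for wiring diagrams and systematic symmetry reduction are indispensable if the search is to complete in reasonable time. Stretchability testing in step three is secondary but still delicate, because Mn\"ev universality warns that polynomial obstructions for unstretchability can be arbitrarily complicated; for this range of $n$ one expects obstructions of Pappus/Desargues type to suffice, but each case must be rigorously certified since an unnoticed stretching would falsely inflate the unstretchable count and, symmetrically, a missed obstruction would falsely deflate it.
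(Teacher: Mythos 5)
Your proposal has two serious problems, one fatal to the stated goal and one methodological.

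First, the statement you are trying to prove is \emph{false}, and the paper's whole point in listing this conjecture is to disprove it. The classification in the paper (see the table of isomorphism classes) yields $2$ unstretchable simplicial arrangements with $15$ pseudolines and $7$ with $16$ pseudolines --- nine in total, not five. Your plan ends with ``the conjecture is confirmed precisely when the unstretchable part consists of exactly $5$ isomorphism classes,'' which correctly makes the outcome conditional on the count, but you present this as a proof strategy for the conjecture; carried out correctly, the computation refutes it. A blind attempt at a proof here cannot succeed, and you should have flagged that the enumeration might just as well produce a counterexample (as it does).

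Second, your enumeration step is not sound even as a computation. You propose to generate the simplicial arrangements on $n$ strands by inserting an $n$-th pseudoline into the already-classified \emph{simplicial} arrangements on $n-1$ strands. This implicitly assumes that every simplicial arrangement of $n$ pseudolines contains a simplicial sub-arrangement of $n-1$ pseudolines, i.e.\ that deleting some line preserves simpliciality. That is false in general --- the irregular structure of the Hasse diagram of simplicial arrangements (e.g.\ $\Ac(21,4)$ sitting below $\Ac(26,4)$ with no chain of single-line deletions through simplicial arrangements) is precisely the obstruction. The paper instead enumerates wiring fragments from scratch: it grows allowable sequences junction by junction, cuts the search tree with combinatorial obstructions derived from simpliciality (the lemmas on neighboring ordinary vertices, on the vertex counts $v_m^i$ and $s_i$, etc.), and reduces symmetry by normalizing the beginning of the wiring up to a dihedral group action. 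Your stretchability step (Pappus-type obstructions plus explicit realization, here via a Gr\"obner basis of the incidence ideal) does match the paper's, and your caution about Mn\"ev universality is apt, but without a complete enumeration the count of unstretchable classes is meaningless.
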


\begin{conje}{(\cite{p-G-09} or \cite[Conj.\ 3]{p-G-09b})}
There are no simplicial arrangements of straight lines besides the
three infinite families and $90$ sporadic arrangements listed in \cite{p-G-09}.
\end{conje}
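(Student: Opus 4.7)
The plan is to \emph{disprove} the conjecture, since a single simplicial line arrangement outside Gr\"unbaum's list of $90$ sporadic arrangements (together with the three infinite families) already refutes it. The strategy is entirely computational and piggybacks on the classification announced in the main theorem of the paper.

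First, I would enumerate all simplicial pseudoline arrangements in the real projective plane up to a sufficient bound (here $n\le 27$), using wiring diagrams / allowable sequences and the Goodman--Pollack correspondence. The enumeration is incremental: one sweeps in pseudolines one at a time, retaining only partial arrangements that can conceivably be completed to a simplicial one, and quotienting out by the natural symmetries (choice of line at infinity, reflections, relabelings) so that isomorphic objects are not recomputed. Simpliciality is a very restrictive local condition, which is what makes this search feasible: each newly inserted pseudoline must cut the existing cells in a way that preserves the possibility of a triangulation, giving sharp branch cuts.

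Second, for every simplicial pseudoline arrangement produced, I would test \emph{stretchability}, i.e.\ whether it can be realized by genuine straight lines. For those that are stretchable, the combinatorial type is then compared against every entry of Gr\"unbaum's catalogue (and the three infinite families). Any arrangement that matches no entry is the sought counterexample; the expected outcome, as announced in the abstract, is four new sporadic simplicial line arrangements with $22,23,24,25$ lines, together with further (possibly unstretchable) pseudoline examples that also refute the $15/16$-line conjecture.

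The main obstacle will be the enumeration step: the number of wiring diagrams on $n$ strings grows super-exponentially, and even with aggressive simplicial pruning the search tree for $n$ close to $27$ is enormous, so the isomorphism tests and the ordering of extensions must be implemented with care. A secondary but less severe obstacle is stretchability, which is $\exists\RR$-complete in general; for the modest $n$ at hand one can treat the stretchable cases by finding explicit (first numerical, then rational) coordinates and the unstretchable cases by producing combinatorial obstructions such as final polynomials or non-Pappus-style configurations.
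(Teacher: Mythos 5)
Your proposal is correct and is essentially the paper's own route: enumerate simplicial wirings (allowable sequences) up to $27$ lines via the Goodman--Pollack correspondence with aggressive simpliciality pruning, quotient by isomorphism, test stretchability (the paper uses Pappus' theorem as a filter and then Gr\"obner bases of the incidence ideal), and exhibit the four new stretchable arrangements with $22$, $23$, $24$, $25$ lines as counterexamples. The only minor discrepancy is that the paper's incremental enumeration extends a wiring one junction (reversal) at a time rather than inserting whole pseudolines one at a time, but this does not change the substance of the argument.
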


We further obtain a proof for:

\begin{conje}{(\cite[Conj.\ 1]{b-G-72}, \cite[6.3]{BLVSWZ} or
\cite[3.1]{p-G-09b})}
All simplicial arrangements with at most $14$ pseudolines are stretchable.
\end{conje}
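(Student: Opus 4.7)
The plan is to piggyback on the algorithm that produces the complete enumeration promised by the main theorem: by construction, that algorithm lists all combinatorial types of simplicial pseudoline arrangements with up to $27$ lines, hence in particular up to $14$ lines. It then suffices to verify, one arrangement at a time, that every item on the list with at most $14$ pseudolines admits a straight-line realization. If this can be done, then no simplicial pseudoline arrangement in that range is unstretchable, which is exactly the conjecture.

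Concretely, I would first extract the sublist of output consisting of simplicial arrangements of pseudolines with $n\le 14$ lines (recorded as wiring diagrams or allowable sequences in the sense of Goodman--Pollack). The list is short enough that for each combinatorial type I can attempt to match it against an entry in Gr\"unbaum's catalogue \cite{p-G-09} of straight-line simplicial arrangements with the same number of lines; the isomorphism test is just a comparison of the underlying oriented matroids (equivalently, of the allowable sequences up to the natural symmetries). Any match immediately produces an explicit stretching, since Gr\"unbaum's catalogue already provides coordinates.

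Should there remain a pseudoline arrangement that does not match any Gr\"unbaum entry, I would try to stretch it by direct computation: fix a projective frame, introduce parameters for the remaining lines, and translate every combinatorial incidence of the wiring diagram into a polynomial (in)equality. For $n\le 14$ the resulting system is small enough to be handled by Gr\"obner-basis or cylindrical algebraic decomposition methods, producing either a real solution (stretching) or a certificate of infeasibility (unstretchability). The expectation, consistent with what has long been known for small $n$, is that no such leftover occurs.

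The main obstacle is not the enumeration itself, which is given by the main theorem, but rather controlling the bookkeeping in the matching step: two pseudoline arrangements may have the same combinatorial type although the raw wiring diagrams look very different, so one has to canonicalize under the action of relabellings and reflections before comparing with Gr\"unbaum's list. Once this canonical form is in place the comparison becomes mechanical, and the conjecture follows at once from observing that the canonical forms coincide with those of genuine line arrangements for every $n\le 14$.
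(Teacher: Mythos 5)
Your proposal follows essentially the same route as the paper: enumerate all simplicial wirings up to isomorphism via the wiring-diagram algorithm, then settle stretchability case by case, falling back on a Gr\"obner-basis computation of the realization ideal exactly as the paper does (the paper merely uses Pappus' theorem rather than catalogue-matching as its cheap first filter, which for $n\le 14$ leaves nothing unstretchable). The argument is correct and matches the paper's proof in substance.
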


The new arrangements have $22$, $23$, $24$ and $25$ lines and are all contained
in the largest one with $25$ lines, see Fig.\ref{a25}.
To obtain the smaller arrangements, remove the thicker lines.

In view of the (to my eyes) bizarre structure of the Hasse diagram,
Fig.\ \ref{hassediag} or \cite[Fig.\ 4]{p-G-09}, and considering the
great amount of computations needed for our result,
it is very impressive that Gr\"unbaum missed only four arrangements.
Notice that there are a few connections missing in \cite[Fig.\ 4]{p-G-09}; in
particular it turns out that the arrangement $\Ac(21,4)$ is not maximal
but contained in the arrangement $\Ac(26,4)$.
We reproduce the table \cite[pp.\ 6--10]{p-G-09} here (up to $27$ lines)
because of the new arrangements and because of a new column containing
the automorphism groups of the arrangements.

This article is organized as follows: We first review some properties
of wirings in general. After several preparations we then give an algorithm to
enumerate simplicial wirings. Finally, we discuss stretchability and
summarize the results.

\begin{figure}
\begin{center}
\setlength{\unitlength}{0.75pt}
\begin{picture}(400,400)(100,200)
\moveto(483.279413355678638377155548610,480.054085717084338934572008031)
\lineto(130.503014223155076370906651531,293.836108715985595420223336931)
\moveto(483.279413366770748824747498223,319.945914308310436083902262571)
\lineto(130.503014233221380939685096682,506.163891300085860340335525427)
\moveto(489.736659610102759919933612666,336.754446796632413360022129111)
\lineto(110.263340389897240080066387334,336.754446796632413360022129111)
\moveto(445.954063266690261424521593589,536.738478183512692236918153419)
\lineto(142.115774818872439595591226237,277.229598685371212288294142057)
\moveto(499.968941388255175537018828007,396.475446118929904989386784716)
\lineto(100.031058611744824462981171993,396.475446118929904989386784716)
\moveto(500.000000000000000000000000000,400.000000000000000000000000000)
\lineto(100.000000000000000000000000000,400.000000000000000000000000000)
\moveto(412.292242319575928957559574424,565.500611221957850490508954421)
\lineto(177.908477497051516354885389898,241.590214529177446918065556730)
\moveto(460.513064322353212583522693744,280.687149971815272993458539764)
\lineto(157.049756033705141617363954623,539.875758264170923619199083957)
\strokepath
\linethickness{1.3pt}
\put(100.558059476087039337223223320,414.930249832335867314618844439)
{\line(1,0){398.883881047825921325553553360}}
\strokepath
\thinlines
\moveto(445.954063246347847139173955770,263.261521794773902285703895248)
\lineto(142.115774800608038833929810232,522.770401291140546798449617421)
\moveto(471.134416669537106401377383759,296.496321653879082232897979214)
\lineto(117.993254917023979504547325809,482.906843772396643963738598479)
\moveto(460.513064322353212583522693744,519.312850028184727006541460236)
\lineto(157.049756033705141617363954623,260.124241735829076380800916043)
\moveto(484.441096960441932830799310458,322.659960227390252484334427667)
\lineto(116.588784385253314218725956535,479.751651937253948704511175819)
\strokepath
\linethickness{1.3pt}
\put(303.338505347899997175593871336,200.027865986177856364144395549)
{\line(0,1){399.944268027644287271711208902}}
\strokepath
\thinlines
\moveto(426.334971743568835670986142977,244.953313758237116378416192356)
\lineto(192.229050979468058777861179225,568.479739278092145968955224612)
\moveto(471.134416683226558598751035871,503.503678323486587720988154081)
\lineto(117.993254927989283280041109110,317.093156203531045984840639524)
\moveto(338.299787310127281517364357395,596.298564161837447847100721414)
\lineto(338.299787310127281517364357395,203.701435838162552152899278586)
\strokepath
\linethickness{1.3pt}
\put(100.558059476087039337223223320,385.069750167664132685381155561)
{\line(1,0){398.883881047825921325553553360}}
\strokepath
\thinlines
\moveto(489.736659610102759919933612666,463.245553203367586639977870889)
\lineto(110.263340389897240080066387334,463.245553203367586639977870889)
\moveto(499.968941388255175537018828007,403.524553881070095010613215284)
\lineto(100.031058611744824462981171993,403.524553881070095010613215284)
\moveto(268.377223398316206680011064556,597.484176581314990174384610437)
\lineto(268.377223398316206680011064556,202.515823418685009825615389563)
\moveto(412.292242345792395890047456540,234.499388795830034619472747994)
\lineto(177.908477522144746358038505178,558.409785490162713451409036924)
\moveto(484.441096956146259179388725696,477.340039782854101704233379992)
\lineto(116.588784380823693147337672844,320.248348072933203106675444791)
\moveto(426.334971754565657189570080318,555.046686232802465507560496471)
\lineto(192.229050991417631305617993376,231.520260714264105428847851383)
\strokepath
\put(470,560){\Large $\infty$}
\end{picture}
\end{center}
\caption{A new simplicial arrangement with $25$ lines\label{a25}.
The symbol ``$\infty$'' stands for the line at infinity; removing the thicker lines
gives further new simplicial arrangements with $22$, $23$ and $24$ lines, respectively.}
\end{figure}
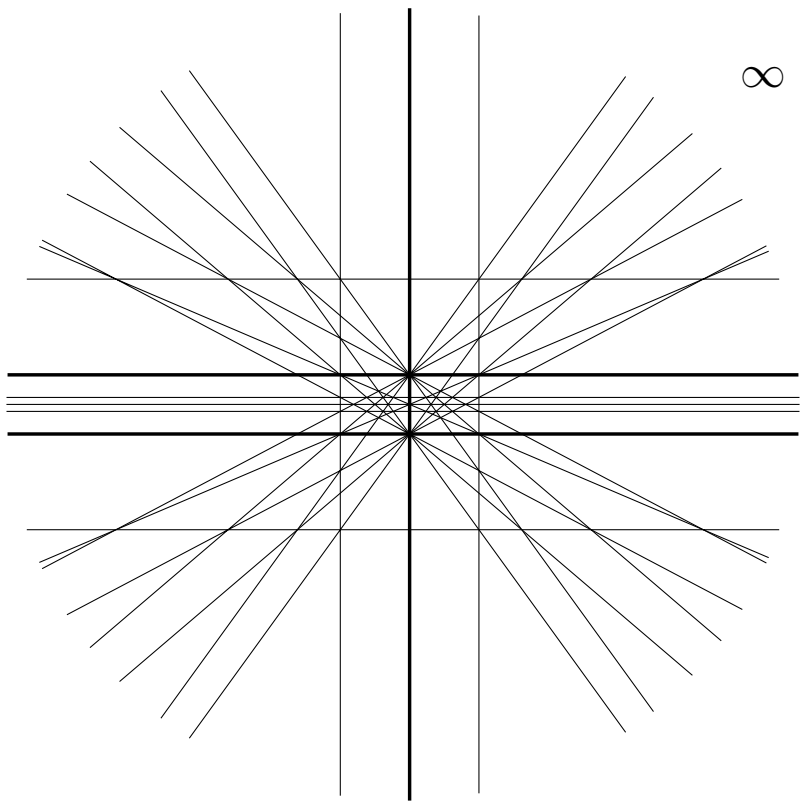

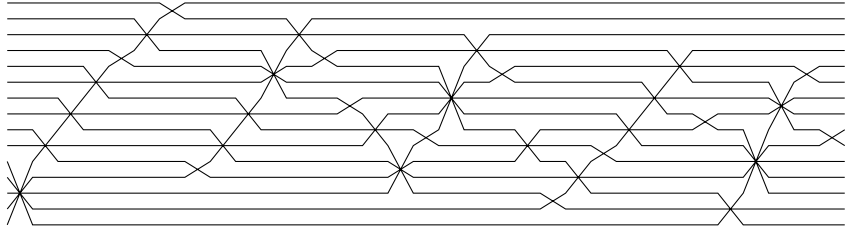
\begin{figure}
\begin{center}
\setlength{\unitlength}{1.2pt}
\begin{picture}(400,60)(40,310)
\moveto(58,305)\lineto(58,305)\lineto(66,325)\lineto(66,325)\lineto(74,335)\lineto(74,335)\lineto(82,345)\lineto(82,345)\lineto(90,355)\lineto(90,355)\lineto(98,360)\lineto(98,360)\lineto(106,370)\lineto(106,370)\lineto(114,375)\lineto(322,375)
\moveto(58,310)\lineto(58,310)\lineto(66,320)\lineto(114,320)\lineto(122,325)\lineto(122,325)\lineto(130,335)\lineto(130,335)\lineto(138,345)\lineto(138,345)\lineto(146,360)\lineto(146,360)\lineto(154,370)\lineto(322,370)
\moveto(58,315)\lineto(58,315)\lineto(66,315)\lineto(178,315)\lineto(186,330)\lineto(186,330)\lineto(194,335)\lineto(194,335)\lineto(202,355)\lineto(202,355)\lineto(210,365)\lineto(322,365)
\moveto(58,320)\lineto(58,320)\lineto(66,310)\lineto(226,310)\lineto(234,315)\lineto(234,315)\lineto(242,325)\lineto(242,325)\lineto(250,330)\lineto(250,330)\lineto(258,340)\lineto(258,340)\lineto(266,350)\lineto(266,350)\lineto(274,360)\lineto(322,360)
\moveto(58,325)\lineto(58,325)\lineto(66,305)\lineto(282,305)\lineto(290,315)\lineto(290,315)\lineto(298,335)\lineto(298,335)\lineto(306,350)\lineto(306,350)\lineto(314,355)\lineto(322,355)
\moveto(58,330)\lineto(66,330)\lineto(74,330)\lineto(122,330)\lineto(130,330)\lineto(170,330)\lineto(178,340)\lineto(194,340)\lineto(202,350)\lineto(210,350)\lineto(218,355)\lineto(266,355)\lineto(274,355)\lineto(306,355)\lineto(314,350)\lineto(322,350)
\moveto(58,335)\lineto(66,335)\lineto(74,325)\lineto(114,325)\lineto(122,320)\lineto(178,320)\lineto(186,325)\lineto(218,325)\lineto(226,335)\lineto(250,335)\lineto(258,335)\lineto(274,335)\lineto(282,340)\lineto(298,340)\lineto(306,345)\lineto(322,345)
\moveto(58,340)\lineto(74,340)\lineto(82,340)\lineto(130,340)\lineto(138,340)\lineto(162,340)\lineto(170,345)\lineto(194,345)\lineto(202,345)\lineto(258,345)\lineto(266,345)\lineto(298,345)\lineto(306,340)\lineto(322,340)
\moveto(58,345)\lineto(74,345)\lineto(82,335)\lineto(122,335)\lineto(130,325)\lineto(178,325)\lineto(186,320)\lineto(234,320)\lineto(242,320)\lineto(290,320)\lineto(298,330)\lineto(314,330)\lineto(322,335)\lineto(322,335)
\moveto(58,350)\lineto(82,350)\lineto(90,350)\lineto(138,350)\lineto(146,355)\lineto(154,355)\lineto(162,360)\lineto(202,360)\lineto(210,360)\lineto(266,360)\lineto(274,350)\lineto(298,350)\lineto(306,335)\lineto(314,335)\lineto(322,330)\lineto(322,330)
\moveto(58,355)\lineto(82,355)\lineto(90,345)\lineto(130,345)\lineto(138,335)\lineto(170,335)\lineto(178,335)\lineto(186,335)\lineto(194,330)\lineto(218,330)\lineto(226,330)\lineto(242,330)\lineto(250,325)\lineto(290,325)\lineto(298,325)\lineto(322,325)
\moveto(58,360)\lineto(90,360)\lineto(98,355)\lineto(138,355)\lineto(146,350)\lineto(194,350)\lineto(202,340)\lineto(250,340)\lineto(258,330)\lineto(290,330)\lineto(298,320)\lineto(322,320)
\moveto(58,365)\lineto(98,365)\lineto(106,365)\lineto(146,365)\lineto(154,365)\lineto(202,365)\lineto(210,355)\lineto(210,355)\lineto(218,350)\lineto(258,350)\lineto(266,340)\lineto(274,340)\lineto(282,335)\lineto(290,335)\lineto(298,315)\lineto(322,315)
\moveto(58,370)\lineto(98,370)\lineto(106,360)\lineto(138,360)\lineto(146,345)\lineto(162,345)\lineto(170,340)\lineto(170,340)\lineto(178,330)\lineto(178,330)\lineto(186,315)\lineto(226,315)\lineto(234,310)\lineto(282,310)\lineto(290,310)\lineto(322,310)
\moveto(58,375)\lineto(106,375)\lineto(114,370)\lineto(146,370)\lineto(154,360)\lineto(154,360)\lineto(162,355)\lineto(194,355)\lineto(202,335)\lineto(218,335)\lineto(226,325)\lineto(234,325)\lineto(242,315)\lineto(282,315)\lineto(290,305)\lineto(322,305)
\strokepath
\end{picture}
\end{center}
\caption{An unstretchable wiring with $15$ lines\label{a15_ns}}
\end{figure}
\vspace{\baselineskip}
\textbf{Acknowledgement.}
I would like to thank M.\ Barakat for many valuable comments on a
previous version of the manuscript.

\section{Wiring diagrams}

We first recall some definitions (compare \cite[6.4]{BLVSWZ}).

\begin{defin}\label{allowseq}
A sequence $\Sigma=(\sigma_0,\ldots,\sigma_m)$ of permutations in $S_n$ is an
\emph{allowable sequence} if
\begin{enumerate}
\item $\sigma_0=\id_{S_n}=[1,\ldots,n]$,
\item $\sigma_m=(1\: n)(2\: (n-1))\ldots=[n,\ldots,1]$,
\item \label{allowseq_3} for each $i=0,\ldots,m-1$ there are $1\le a< b\le n$ such that
\begin{eqnarray*}
& & \sigma_i(a)< \sigma_i(a+1)< \ldots < \sigma_i(b),\\
& & \sigma_{i+1}(c)=\sigma_i(a+b-c) \quad\text{for}\quad a\le c \le b,\\
& & \sigma_{i+1}(c)=\sigma_i(c) \quad\text{for}\quad c<a \text{ or } c>b.\\
\end{eqnarray*}
\end{enumerate}
We will encode an allowable sequence by the sequence of $a$'s and $b$'s, i.e.,
$\Sigma$ is uniquely determined by the sequence of pairs
\[ \Spairs=((a_1,b_1),(a_2,b_2),\ldots,(a_m,b_m)) \]
with
\[ \sigma_{i+1}=(\sigma_i(a_{i+1})\:\sigma_i(b_{i+1}))(\sigma_i(a_{i+1}+1)\:\sigma_i(b_{i+1}-1))\ldots \sigma_i.\]
\end{defin}

Allowable sequences are in one-to-one correspondence to \emph{wiring diagrams}
(for example Fig.\ \ref{a15_ns}).
These are a very useful representation for arrangements of pseudolines
(for a proof see \cite[Thm.\ 2.9]{p-GP84} or \cite[Thm.\ 6.3.3]{BLVSWZ}):
\begin{theor}[Goodman and Pollack]
Every arrangement of pseudolines is isomorphic to a wiring diagram arrangement.
\end{theor}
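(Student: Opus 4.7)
The plan is to produce, from an arbitrary arrangement of pseudolines $\Ac=\{L_1,\ldots,L_n\}$ in the real projective plane, an allowable sequence $\Sigma$ whose associated wiring diagram is isomorphic to $\Ac$. First I would pass to an affine chart by removing a point $p\in\RR P^2$ with $p\notin\bigcup_i L_i$ and $p$ not a vertex of $\Ac$. After this cut each $L_i$ becomes a simple arc whose two ends go to ``infinity,'' and any two of them meet transversally in exactly one point. I would then choose a transversal pseudoline $\gamma$ disjoint from all vertices of $\Ac$ and meeting every $L_i$ once; labelling the $L_i$ so that they cross $\gamma$ in the order $1,2,\ldots,n$ gives the starting permutation $\sigma_0=\id$.

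Next I introduce a \emph{sweep}: a continuous family of pseudolines $\gamma_t$, $t\in[0,1]$, with $\gamma_0=\gamma$, each $\gamma_t$ transverse to every $L_i$, and $\gamma_1$ on the ``other side'' of $\Ac$, so that the order in which $\gamma_1$ meets the $L_i$ is the reverse of the order on $\gamma_0$. Reading the intersections $\gamma_t\cap L_i$ along $\gamma_t$ defines a permutation $\sigma_t\in S_n$ for every $t$. Away from the finitely many parameter values $0=t_0<t_1<\ldots<t_m=1$ at which $\gamma_t$ passes through a vertex of $\Ac$, $\sigma_t$ is constant; at each $t_i$ the permutation jumps from $\sigma_{i-1}$ to some $\sigma_i$. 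This produces the candidate sequence $(\sigma_0,\ldots,\sigma_m)$, which clearly satisfies conditions (1) and (2) of Definition \ref{allowseq}.

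The heart of the proof is condition (3): at each jump $\sigma_{i-1}\to\sigma_i$, the pseudolines whose positions change form a consecutive block $[a,b]$, and on that block the new order is the reverse of the old. I would argue this as follows. The jump corresponds to $\gamma_{t_i}$ sweeping across a single vertex $v$ of $\Ac$; the pseudolines of $\Ac$ through $v$ are exactly those whose position under $\sigma_t$ changes at $t_i$. Suppose $L_a,L_b$ both pass through $v$ but some $L_c$ between them in $\sigma_{i-1}$ does not. In a small disc around $v$, $L_c$ separates the local branches of $L_a$ from those of $L_b$; following the four arcs of $L_a\cup L_b$ outside this disc back to $\gamma_{t_i-\varepsilon}$ and $\gamma_{t_i+\varepsilon}$, each must cross $L_c$ an odd number of times, forcing $|L_a\cap L_c|\ge 2$ or $|L_b\cap L_c|\ge 2$, contradicting the pseudoline axiom. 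Hence the lines through $v$ occupy a consecutive block, and the local picture at $v$ (all $k$ branches meeting in a single point) shows that their cyclic order reverses.

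The main obstacle is making this topological ``consecutive block'' argument clean in the pseudoline setting, where no ambient linear structure is available and where $k$ pseudolines may be concurrent at $v$. A convenient device is to first perturb $\Ac$ combinatorially to a \emph{simple} pseudoline arrangement (no three concurrent), prove the theorem there where each $t_i$ swaps just two adjacent entries, and then recover concurrences by collapsing the corresponding consecutive sequence of simple swaps into one reversal of a block of length $b-a+1$. Once Definition \ref{allowseq}(3) is established, the wiring diagram with swap pairs $\Psi=((a_1,b_1),\ldots,(a_m,b_m))$ has, by construction, the same set of vertices and the same incidences as $\Ac$, so the two arrangements of pseudolines are isomorphic.
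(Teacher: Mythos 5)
The paper does not actually prove this theorem: it is quoted with an explicit pointer to \cite[Thm.\ 2.9]{p-GP84} and \cite[Thm.\ 6.3.3]{BLVSWZ}, and the only related argument in the text is the sketch in Lemma \ref{wirbeg}, which rotates a \emph{straight} line $H'$ near a line of a \emph{straight-line} arrangement. So your proposal has to be judged against the cited proofs, and measured that way it has one genuine gap: you simply posit the sweep. The sentence ``I introduce a sweep: a continuous family of pseudolines $\gamma_t$, each transverse to every $L_i$, passing each vertex exactly once, ending in reversed order'' is not a construction but an assertion of exactly the hard part of the theorem. For straight lines one rotates $H'$ and the sweep is free; for genuine pseudolines there is no ambient rotation, and the existence of such a family (each $\gamma_t$ still a pseudoline compatible with the arrangement, the permutation $\sigma_t$ well defined and locally constant, every vertex crossed exactly once, monotonicity so that no vertex is crossed twice) is a theorem in its own right --- it is essentially the content of Goodman--Pollack's argument and of the later ``sweeping lemma'' for pseudoline arrangements. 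Your fallback of first perturbing to a simple arrangement does not close the gap, because the simple case still needs the sweep, and the perturb-then-collapse step is itself unjustified (why does a combinatorial perturbation to a simple arrangement exist, and why does collapsing the resulting adjacent transpositions yield a single reversal of an \emph{increasing} block as required by Definition \ref{allowseq}(\ref{allowseq_3})?).

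The parts you do argue are sound: the choice of the initial transversal $\gamma$ (take a pseudoline of $\Ac$ and push it off itself, as in Lemma \ref{wirbeg}), the Jordan-curve argument that the pseudolines through a swept vertex $v$ must occupy a consecutive block of positions (a separating $L_c$ would have to meet $L_a$ or $L_b$ twice), and the local reversal at $v$. You should also say a word about why the block is \emph{increasing} before the reversal (the lines through $v$ meet pairwise only at $v$, so just before $v$ they still appear in their initial relative order), and about why equality of the sequences of permutations yields an isomorphism of the induced cell decompositions of $\PP^2$, not merely a bijection of vertices. But the decisive missing ingredient is the existence of the sweep; without it the proof does not get off the ground, which is presumably why the paper delegates the whole statement to \cite{p-GP84} and \cite{BLVSWZ}.
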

\begin{remar}
It is easy to give an algorithm which computes a wiring for a given arrangement
of straight lines, see \cite[Sect.\ 2]{p-GP84} or Lemma \ref{wirbeg} for more details.
\end{remar}
Figure \ref{a15_ns} shows an example of an \emph{unstretchable} wiring,
i.e., the cell decomposition of $\PP^2$ induced by the wiring is not
combinatorially isomorphic to the cell decomposition induced by some arrangement
of straight lines. Notice that this example is \emph{simplicial} which means
that all $2$-cells have exactly $3$ vertices. We will see that there is no
unstretchable simplicial wiring with less than $15$ lines.

Our goal is to design an algorithm to enumerate simplicial arrangements,
or more generally simplicial arrangements of pseudolines.
By the above theorem, we may enumerate certain wiring diagrams instead.
However, there are many different wiring diagrams which yield isomorphic
arrangements. So the most important part will be to recognize symmetries
to avoid computations producing no ``new'' wirings.

But let us first look at a naive version of such an algorithm.
Let $W_n$ denote the set of allowable sequences in $S_n$ (equivalently
the set of wiring diagrams with $n$ rows). During the algorithm, we will
successively enlarge a sequence $\Spairs$ until it becomes an element of $W_n$.
More precisely, we encode an allowable sequence in construction by the
following data.
\begin{defin}
A \emph{wiring fragment} $\omega$
consists of:
\begin{itemize}
\item $m\in\NN,$
\item $\sigma_m=[\sigma_m(1),\ldots,\sigma_m(n)]\in \{1,\ldots,n\}^n$,
\item $\Spairs=((a_1,b_1),(a_2,b_2),\ldots,(a_m,b_m))$,
\item Number of lines $\varepsilon_m^i$ going through the last junction in row $i$ for $i=1,\ldots,n$:
$\varepsilon_m^i:=b_k-a_k+1$ where $k$ is maximal with $a_k\le i\le b_k$.
\item For each $i$, the number $v_m^i$ of finished vertices of the last $2$-cell between
row $i$ and $i+1$
(let $v_m^0$ be the number of vertices of the polygon between row $n$ and $1$),
\item For each $i$, the number $s_i$ of finished vertices of the first $2$-cell between
row $i$ and $i+1$ (we define $s_0:=0$),
\item The maximal $n\ge d_m\in\NN$ such that $\sigma_m(i)=n+1-i$ for all $i=1,\ldots,d_m-1$,
\item The minimal $1\le u_m\in\NN$ such that $\sigma_m(i)=n+1-i$ for all $i=u_m+1,\ldots,n$,
\end{itemize}
where $\sigma_m$ and $\Spairs$ are being interpreted as the beginning of an
allowable sequence and have to satisfy the corresponding axioms.

We will call $\omega$ \emph{complete} if $\sigma_m=[n,\ldots,1]$. In this case,
$\omega$ ``is'' an allowable sequence (or a wiring), and $d_m\ge u_m$.
\end{defin}
The wiring fragment is continuously updated during the algorithm.
Note that we will need most variables of the wiring fragment later but mention them
already in this section to avoid a second definition.

\begin{examp}
Fig.\ \ref{wirfragex} shows a wiring fragment with the following data:
\begin{tiny}
\begin{eqnarray*}
\sigma_m &=& [5,4,11,8,13,9,6,16,10,19,14,20,12,7,17,18,15,3,2,1],\\
\Spairs &=& ((1,5),(5,6),(6,8),(8,9),(9,11),(11,13),(13,14),(14,16),\\
&&(16,17),(17,19),(19,20),(4,6),(6,9),(9,11),(11,14),(14,17),\\
&&(17,19),(16,17),(8,9),(5,6),(3,5),(5,8),(8,11),(11,12),\\
&&(12,14),(14,16),(16,18),(10,12),(12,14)),\\
s_i&=&0,1,1,1,1,2,2,1,2,2,1,2,1,2,2,1,2,2,1,2,\\
v^i_m&=&2,1,2,1,2,1,1,2,1,2,1,2,1,1,2,2,1,1,2,2,\\
\varepsilon^i_m&=&5,5,3,3,4,4,4,4,4,3,3,3,3,3,3,3,3,3,3,2,\\
d_m &=& 1,\\ u_m &=& 17.
\end{eqnarray*}
\end{tiny}
\begin{figure}
\begin{center}
\setlength{\unitlength}{1.2pt}
\begin{picture}(400,150)(20,310)
\put(50,306){\tiny $ 1$}\moveto(58,308)\lineto(58,308)\lineto(66,340)\lineto(66,340)\lineto(74,348)\lineto(74,348)\lineto(82,364)\lineto(82,364)\lineto(90,372)\lineto(90,372)\lineto(98,388)\lineto(98,388)\lineto(106,404)\lineto(106,404)\lineto(114,412)\lineto(114,412)\lineto(122,428)\lineto(122,428)\lineto(130,436)\lineto(130,436)\lineto(138,452)\lineto(138,452)\lineto(146,460)\lineto(290,460)\put(290,458){\tiny $ 1$}
\put(50,314){\tiny $ 2$}\moveto(58,316)\lineto(58,316)\lineto(66,332)\lineto(146,332)\lineto(154,348)\lineto(154,348)\lineto(162,372)\lineto(162,372)\lineto(170,388)\lineto(170,388)\lineto(178,412)\lineto(178,412)\lineto(186,436)\lineto(186,436)\lineto(194,452)\lineto(290,452)\put(290,450){\tiny $ 2$}
\put(50,322){\tiny $ 3$}\moveto(58,324)\lineto(58,324)\lineto(66,324)\lineto(218,324)\lineto(226,340)\lineto(226,340)\lineto(234,364)\lineto(234,364)\lineto(242,388)\lineto(242,388)\lineto(250,396)\lineto(250,396)\lineto(258,412)\lineto(258,412)\lineto(266,428)\lineto(266,428)\lineto(274,444)\lineto(290,444)\put(290,442){\tiny $ 3$}
\put(50,330){\tiny $ 4$}\moveto(58,332)\lineto(58,332)\lineto(66,316)\lineto(290,316)\put(290,314){\tiny $ 4$}
\put(50,338){\tiny $ 5$}\moveto(58,340)\lineto(58,340)\lineto(66,308)\lineto(290,308)\put(290,306){\tiny $ 5$}
\put(50,346){\tiny $ 6$}\moveto(58,348)\lineto(66,348)\lineto(74,340)\lineto(146,340)\lineto(154,340)\lineto(210,340)\lineto(218,348)\lineto(226,348)\lineto(234,356)\lineto(290,356)\put(290,354){\tiny $ 6$}
\put(50,354){\tiny $ 7$}\moveto(58,356)\lineto(74,356)\lineto(82,356)\lineto(154,356)\lineto(162,364)\lineto(202,364)\lineto(210,372)\lineto(234,372)\lineto(242,380)\lineto(274,380)\lineto(282,396)\lineto(282,396)\lineto(290,412)\lineto(290,412)\put(290,410){\tiny $ 7$}
\put(50,362){\tiny $ 8$}\moveto(58,364)\lineto(74,364)\lineto(82,348)\lineto(146,348)\lineto(154,332)\lineto(218,332)\lineto(226,332)\lineto(290,332)\put(290,330){\tiny $ 8$}
\put(50,370){\tiny $ 9$}\moveto(58,372)\lineto(82,372)\lineto(90,364)\lineto(154,364)\lineto(162,356)\lineto(226,356)\lineto(234,348)\lineto(290,348)\put(290,346){\tiny $ 9$}
\put(50,378){\tiny $10$}\moveto(58,380)\lineto(90,380)\lineto(98,380)\lineto(162,380)\lineto(170,380)\lineto(234,380)\lineto(242,372)\lineto(290,372)\put(290,370){\tiny $10$}
\put(50,386){\tiny $11$}\moveto(58,388)\lineto(90,388)\lineto(98,372)\lineto(154,372)\lineto(162,348)\lineto(210,348)\lineto(218,340)\lineto(218,340)\lineto(226,324)\lineto(290,324)\put(290,322){\tiny $11$}
\put(50,394){\tiny $12$}\moveto(58,396)\lineto(98,396)\lineto(106,396)\lineto(170,396)\lineto(178,404)\lineto(250,404)\lineto(258,404)\lineto(282,404)\lineto(290,404)\lineto(290,404)\put(290,402){\tiny $12$}
\put(50,402){\tiny $13$}\moveto(58,404)\lineto(98,404)\lineto(106,388)\lineto(162,388)\lineto(170,372)\lineto(202,372)\lineto(210,364)\lineto(226,364)\lineto(234,340)\lineto(290,340)\put(290,338){\tiny $13$}
\put(50,410){\tiny $14$}\moveto(58,412)\lineto(106,412)\lineto(114,404)\lineto(170,404)\lineto(178,396)\lineto(242,396)\lineto(250,388)\lineto(274,388)\lineto(282,388)\lineto(290,388)\put(290,386){\tiny $14$}
\put(50,418){\tiny $15$}\moveto(58,420)\lineto(114,420)\lineto(122,420)\lineto(178,420)\lineto(186,428)\lineto(194,428)\lineto(202,436)\lineto(266,436)\lineto(274,436)\lineto(290,436)\put(290,434){\tiny $15$}
\put(50,426){\tiny $16$}\moveto(58,428)\lineto(114,428)\lineto(122,412)\lineto(170,412)\lineto(178,388)\lineto(234,388)\lineto(242,364)\lineto(290,364)\put(290,362){\tiny $16$}
\put(50,434){\tiny $17$}\moveto(58,436)\lineto(122,436)\lineto(130,428)\lineto(178,428)\lineto(186,420)\lineto(258,420)\lineto(266,420)\lineto(290,420)\put(290,418){\tiny $17$}
\put(50,442){\tiny $18$}\moveto(58,444)\lineto(130,444)\lineto(138,444)\lineto(186,444)\lineto(194,444)\lineto(266,444)\lineto(274,428)\lineto(290,428)\put(290,426){\tiny $18$}
\put(50,450){\tiny $19$}\moveto(58,452)\lineto(130,452)\lineto(138,436)\lineto(178,436)\lineto(186,412)\lineto(250,412)\lineto(258,396)\lineto(274,396)\lineto(282,380)\lineto(290,380)\put(290,378){\tiny $19$}
\put(50,458){\tiny $20$}\moveto(58,460)\lineto(138,460)\lineto(146,452)\lineto(186,452)\lineto(194,436)\lineto(194,436)\lineto(202,428)\lineto(258,428)\lineto(266,412)\lineto(282,412)\lineto(290,396)\lineto(290,396)\put(290,394){\tiny $20$}
\strokepath
\end{picture}
\end{center}
\caption{A wiring fragment\label{wirfragex}}
\end{figure}
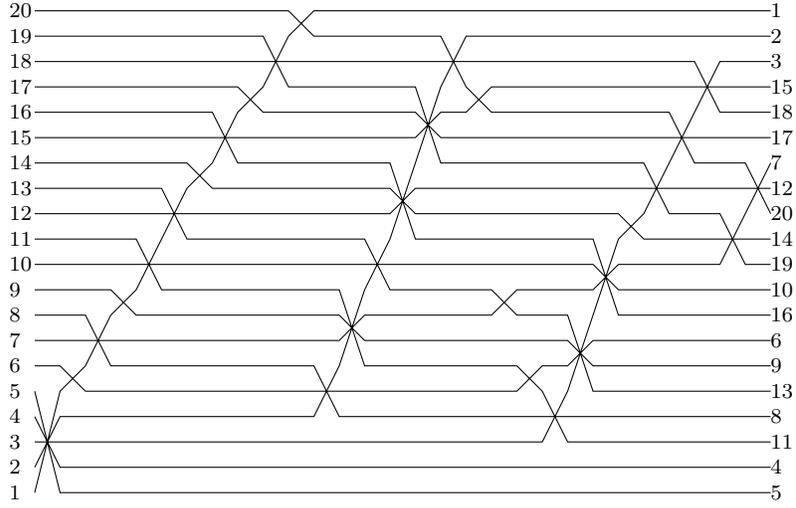
\end{examp}

Calling the following function with an initial ``empty'' wiring fragment $\omega$ will
enumerate all wirings with $n$ rows:

\algo{EnumerateWirings}{$\omega$}{Enumerates all allowable sequences in $W_n$}
{A wiring fragment $\omega$}{List of completions of the fragment $\omega$ to a wiring}
{
\item If $\sigma_m=[n,\ldots,1]$ then return $\{\omega\}$.
\item $R\leftarrow \emptyset$.
\item For all $1\le i<j\le n$ do:
\item If $\sigma_m(i)<\sigma_m(i+1)<\ldots<\sigma_m(j-1)<\sigma_m(j)$, then
update all the data to a new fragment $\omega'$ with $\Spairs'(m+1)=(i,j)$,
and call $R:=R\:\cup\:${\bf EnumerateWirings}$(\omega')$.
\item Return $R$.
}

Of course, this algorithm will not lead very far. But
let $L_n$ be the set of isomorphism classes of arrangements with $n$ pseudolines.
We have a surjective map
\[ \pi : W_n \rightarrow L_n, \]
mapping a wiring to an arrangement up to isomorphisms.
The most important improvements to our algorithm will be to find a smaller set $W_n'\subseteq W_n$
such that $\pi|_{W'_n}$ is still surjective.

\begin{lemma}
Let $W'_n$ be the set of allowable sequences
\[ \Spairs=((a_1,b_1),(a_2,b_2),\ldots,(a_m,b_m)) \]
such that for each $1\le i<m$ we have
\[ b_i=a_{i+1} \quad \text{or} \quad b_{i+1}\le a_i. \]
Then $\pi|_{W'_n}$ is surjective.
\end{lemma}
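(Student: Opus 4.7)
The plan is to exhibit a normalization procedure: starting from any allowable sequence $\Sigma\in W_n$, repeatedly swap commuting consecutive moves until the $W'_n$-condition holds at every position. Because $\pi\colon W_n\to L_n$ is already surjective, it suffices to check that every such swap preserves the underlying element of $L_n$, and that the procedure terminates.

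The combinatorial core is a quadrichotomy on consecutive pairs $(a_i,b_i),(a_{i+1},b_{i+1})$. After move $i$ the entries of $\sigma_i$ in positions $a_i,\ldots,b_i$ are strictly decreasing, whereas the validity of move $i{+}1$ (item~(\ref{allowseq_3}) of Definition~\ref{allowseq}) demands that the entries of $\sigma_i$ in positions $a_{i+1},\ldots,b_{i+1}$ be strictly increasing. A short case analysis on $[a_i,b_i]\cap[a_{i+1},b_{i+1}]$ shows that any overlap containing two consecutive positions would force a pair of entries to be simultaneously increasing and decreasing, a contradiction. Hence only single-endpoint overlaps survive, and exactly one of the following holds: (A) $b_i<a_{i+1}$, (B) $b_{i+1}<a_i$, (C) $b_i=a_{i+1}$, (D) $b_{i+1}=a_i$. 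Cases (B), (C) and (D) already satisfy the $W'_n$-condition, so only case (A) has to be eliminated.

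In case (A) the supports $[a_i,b_i]$ and $[a_{i+1},b_{i+1}]$ are disjoint, so the two moves commute: each of their monotonicity conditions involves only entries the other move does not touch, and the resulting permutation $\sigma_{i+1}$ is unchanged, so the swapped sequence is again allowable. The associated wiring is altered only by interchanging the horizontal positions of two independent crossings, which does not affect the arrangement's isomorphism class; hence the image in $L_n$ is preserved. To force termination I would use the monovariant $\Phi(\Sigma):=\sum_{j=1}^{m} j\,a_j$: a case-(A) swap replaces $(a_i,a_{i+1})$ by $(a_{i+1},a_i)$ at positions $i,i{+}1$, and since $a_i\le b_i<a_{i+1}$ this strictly decreases $\Phi$ by $a_{i+1}-a_i\ge 1$. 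As $\Phi$ is bounded below, only finitely many swaps can occur; at termination every consecutive pair falls in (B), (C) or (D), i.e.\ the resulting sequence lies in $W'_n$.

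The main obstacle is the quadrichotomy itself: one must carefully verify that the monotonicity axiom of Definition~\ref{allowseq} rules out every overlap $[a_i,b_i]\cap[a_{i+1},b_{i+1}]$ of size at least two, and that endpoint overlaps are exactly the chained cases (C) and (D). Once this classification is pinned down, the disjoint-support commutation and the monovariant reduction are essentially bookkeeping.
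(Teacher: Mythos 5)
Your proposal is correct and follows essentially the same route as the paper's (much terser) proof: the paper likewise observes that disjoint supports allow the two moves to be interchanged without changing the image under $\pi$, and that by Definition~\ref{allowseq}~(\ref{allowseq_3}) the intervals $[a_i,b_i]$ and $[a_{i+1},b_{i+1}]$ can meet in at most one point. Your quadrichotomy, commutation argument, and monovariant merely make explicit the normalization and termination steps that the paper leaves implicit.
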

\begin{proof}
Observe that if $\Spairs(i)=(a_i,b_i)$, $\Spairs(i+1)=(a_{i+1},b_{i+1})$, and
either $a_i>b_{i+1}$ or $a_{i+1}>b_i$, then we may interchange $\Spairs(i)$ and $\Spairs(i+1)$
without changing the image under $\pi$. Moreover, the intervals $[a_i,b_i]$ and $[a_{i+1},b_{i+1}]$
may intersect in only one point by Def.\ \ref{allowseq} (\ref{allowseq_3}).
\end{proof}

This gives a new version of the algorithm:

\algo{EnumerateWirings2}{$\omega$}{Enumerates all allowable sequences in $W'_n$}
{A wiring fragment $\omega$}{List of completions of the fragment $\omega$}
{
\item If $\sigma_m=[n,\ldots,1]$, then return $\{\omega\}$.
\item $R\leftarrow \emptyset$.
\item For all $i = b_m+1,\ldots,n$ do:
\item If $\sigma_m(b_m)<\sigma_m(b_m+1)<\ldots<\sigma_m(i)$, then
update all the data to a new fragment $\omega'$ with $\Spairs'(m+1)=(b_m,i)$,
and call $R:=R\:\cup\:${\bf EnumerateWirings2}$(\omega')$.
\item For all $1\le i<j\le a_m$ do:
\item If $\sigma_m(i)<\sigma_m(i+1)<\ldots<\sigma_m(j-1)<\sigma_m(j)$, then
update all the data to a new fragment $\omega'$ with $\Spairs'(m+1)=(i,j)$,
and call $R:=R\:\cup\:${\bf EnumerateWirings2}$(\omega')$.
\item Return $R$.
}

Further, we may reduce the symmetries by requiring a certain beginning:

\begin{lemma}\label{wirbeg}
Let $W''_n$ be the set of allowable sequences
\[ \Spairs=((a_1,b_1),(a_2,b_2),\ldots,(a_m,b_m)) \]
such that there exists an $m_0$ with
\[ a_1=1, \quad b_i=a_{i+1}, \quad b_{m_0}=n, \]
for $i=1,\ldots,m_0-1$. Then $\pi|_{W''_n}$ is surjective.
\end{lemma}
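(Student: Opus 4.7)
The plan is to modify any allowable sequence $\Spairs \in W'_n$ representing the given arrangement $\Ac$ (existence by the previous lemma) so that its initial segment is the chain of junctions traversed by line $1$ as it descends to position $n$. To begin, observe that the junctions of $\Spairs$ involving line $1$, listed in their order of appearance, automatically have pairs $(a_{i_1}, b_{i_1}), \ldots, (a_{i_{m_0}}, b_{i_{m_0}})$ satisfying $a_{i_1} = 1$, $b_{i_k} = a_{i_{k+1}}$, and $b_{i_{m_0}} = n$ --- precisely the chain structure defining $W''_n$. The task is thus to reorder $\Spairs$ so that these $m_0$ junctions occupy the first $m_0$ slots.

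The natural attempt is iterative commutation: push $J_{i_1}$ leftward to position $1$, then $J_{i_2}$ to position $2$, and so on, each time swapping past intervening non-line-$1$ junctions. Between $J_{i_{k-1}}$ and $J_{i_k}$ line $1$ sits fixed at position $b_{i_{k-1}}$, so the intervening junctions have intervals avoiding $b_{i_{k-1}}$; typically their intervals are strictly disjoint from $[a_{i_k}, b_{i_k}]$ and commute with $J_{i_k}$ by the previous lemma's argument.

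The hard part is the borderline case in which such an intervening junction has an interval touching $[a_{i_k}, b_{i_k}]$ at exactly one endpoint, blocking a direct swap. To bypass this I would resort to a direct constructive argument: given $\Ac$, pick any pseudoline $\ell$ to serve as line $1$, enumerate its crossings $X_1, \ldots, X_{m_0}$ along $\ell$ with respective multiplicities $r_k$, set $b_k := 1 + r_1 + \cdots + r_k$ so that $b_{m_0} = n$, and choose the initial labeling of the remaining pseudolines so that the $r_k$ lines meeting $\ell$ at $X_k$ receive the consecutive initial positions $b_{k-1}+1, \ldots, b_k$ (ordered as they sit above $\ell$). Then the pairs $(b_{k-1}, b_k)$ for $k = 1, \ldots, m_0$ form a valid initial segment of an allowable sequence: just before step $k$ the positions $b_{k-1}, \ldots, b_k$ still carry the increasing values $1, b_{k-1}+1, \ldots, b_k$, because the preceding junctions affect only positions $\le b_{k-1}$. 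After this chain, line $1$ occupies position $n$ and plays no further role; the remainder of the sequence can be filled in as a wiring of the subarrangement $\Ac \setminus \{\ell\}$ on the first $n-1$ positions (with the lines relabeled to match the current state), whose existence once again follows from the previous lemma. Concatenation then yields the desired wiring in $W''_n$.
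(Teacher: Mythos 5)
Your opening observation is correct and worth keeping: in any allowable sequence the junctions involving line $1$ automatically form a chain $a_{i_1}=1$, $b_{i_k}=a_{i_{k+1}}$, $b_{i_{m_0}}=n$, since the minimal label must sit at the top of every increasing run containing it. You are also right to abandon the commutation attempt: between two consecutive line-$1$ junctions there may be unavoidable moves that prepare the next block of lines, and these do not commute past the next line-$1$ junction. The problem is that your replacement construction has a genuine gap at the gluing step. First, the completion you want to append is \emph{not} supplied by the previous lemma: that lemma produces a wiring of $\Ac\setminus\{\ell\}$ starting from the identity order of some labeling of its lines, whereas here the labeling is already forced by the chain (each line's label is dictated by which $X_k$ it passes through and its local position there), and after the chain the first $n-1$ positions carry a specific non-identity permutation. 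The increasing-run conditions defining the moves of a wiring are not preserved under relabeling or under a change of starting permutation, so "relabel to match the current state" is not a licensed operation. Second, and more seriously, even if some legal concatenation exists, nothing in your argument shows that the resulting allowable sequence realizes $\Ac$ rather than some other arrangement obtained by inserting into $\Ac\setminus\{\ell\}$ a pseudoline with the same crossing pattern along $\ell$; how $\ell$ threads through the cells of $\Ac\setminus\{\ell\}$ is extra data that your two pieces do not jointly control.

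The paper closes both holes at once with a single global sweep (citing Goodman--Pollack, Lemma 2.4): perturb the chosen line $H$ to a line $H'$ through no vertex, fix a line at infinity, and rotate $H'$ so that it passes each vertex of $\Ac$ exactly once; because $H'$ is chosen close enough to $H$, the vertices on $H$ are passed first, which is exactly the required initial chain, and the remainder of the sweep produces the rest of the allowable sequence with the correctness of the realization built in. If you want to keep your two-stage structure, you would need to prove that the sweep of $\Ac$ restricted to the vertices off $\ell$ is a valid continuation from the state left by the chain --- but that is precisely the content of the sweep argument, so there is no shortcut around it.
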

\begin{proof}
This holds by \cite[Lemma 2.4]{p-GP84}.
We sketch here a construction: For a given arrangement $\Ac$,
choose a line $H\in\Ac$. Modifying this line slightly we may assume
that we have a line $H'$ on which no vertex of the arrangement lies.
After the choice of a line at infinity, we rotate the line $H'$ and
pass each vertex exactly once. By choosing a line $H'$ close enough
to $H$, we ensure that the vertices on $H$ are passed first.
\end{proof}

So we call {\bf EnumerateWirings2} with all possible fragments
$\omega$ with $m_0$ junctions as in Lemma \ref{wirbeg} instead of
the empty fragment. Moreover, these fragments are uniquely determined
by the sequences
\[ (b_1-a_1,\ldots,b_{m_0}-a_{m_0}).\]
Considering that the procedure in the proof of Lemma \ref{wirbeg} depends on the choice
of a line at infinity and on the orientation, we obtain:

\begin{lemma}\label{wirbeg_dihedral}
It suffices to start the algorithm with one representative of the orbit
under the action of the dihedral group $\Dc_{m_0}$ on the sequence
$(b_1-a_1,\ldots,b_{m_0}-a_{m_0})$.
\end{lemma}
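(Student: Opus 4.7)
The plan is to explain how the dihedral symmetry arises directly from the two choices made in the construction of Lemma \ref{wirbeg}, namely the choice of a line at infinity and the choice of orientation for the rotation of $H'$. Once these two degrees of freedom are identified with cyclic rotation and reflection respectively, the lemma will follow by combining them with Lemma \ref{wirbeg}.

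First I would recall the setup of Lemma \ref{wirbeg}: given an arrangement $\Ac$ and a distinguished line $H\in\Ac$, the first $m_0$ pairs $(a_i,b_i)$ record the successive reversals produced as the nearby line $H'$ rotates across the vertices lying on $H$. Since consecutive pairs satisfy $b_i=a_{i+1}$, each block is characterized only by its length $b_i-a_i+1$, which is precisely the multiplicity of the corresponding vertex of $H$ (i.e.\ the number of lines meeting $H$ there). Hence the sequence $(b_1-a_1,\ldots,b_{m_0}-a_{m_0})$ is nothing but the list of vertex multiplicities along $H$, read off in the cyclic order in which $H'$ sweeps through them.

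Next I would analyze the two freedoms in the construction. In the projective plane, the line $H$ is topologically a circle, and its vertices form a cyclic sequence on it. Choosing a different line at infinity (with $H'$ still close to $H$) amounts to shifting the starting vertex, so it replaces $(b_i-a_i)_{i=1}^{m_0}$ by a cyclic shift. Reversing the orientation of the rotation of $H'$ replaces the sequence by its reverse. These two operations together generate the dihedral group $\Dc_{m_0}$ acting on the length-$m_0$ sequence of multiplicities, and by construction every element of an orbit corresponds to a valid starting fragment for some choice of line at infinity and orientation, all producing the \emph{same} arrangement $\Ac$ up to isomorphism.

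Finally I would combine this with Lemma \ref{wirbeg} to conclude: the surjectivity of $\pi$ on $W''_n$ shown there means that every isomorphism class of arrangement is hit by at least one starting fragment; by the previous paragraph, two starting fragments whose multiplicity sequences lie in the same $\Dc_{m_0}$-orbit give rise to isomorphic arrangements; hence restricting to one representative per $\Dc_{m_0}$-orbit still suffices for $\pi$ to remain surjective. The only potential subtlety — and the step I would check most carefully — is the cyclic-shift claim: one must verify that the rotation procedure used in Lemma \ref{wirbeg}, applied with a different line at infinity, really does produce the cyclically shifted multiplicity sequence rather than some more complicated re-encoding. This is where the assumption that $H'$ is taken close enough to $H$ (so that the first $m_0$ junctions are exactly those on $H$, in cyclic order) does the work.
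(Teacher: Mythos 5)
Your proposal is correct and takes essentially the same route as the paper: the paper gives no separate proof, deducing the lemma in a single sentence from the observation that the construction of Lemma \ref{wirbeg} depends on the choice of a line at infinity (yielding cyclic shifts of the multiplicity sequence along $H$) and on the orientation of the rotation (yielding reversal). Your write-up merely makes these two identifications explicit, including the correct caveat that $H'$ must be close enough to $H$ so that the first $m_0$ junctions are exactly the vertices on $H$ in cyclic order.
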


\begin{remar}\label{goodchoice}
It is quite important to choose a ``good'' representative $\sigma\in\Dc_{m_0}$
for the algorithm. The following is a choice that has proved to be best to average:
Let $(p_1,\ldots,p_{m_0})=(b_1-a_1,\ldots,b_{m_0}-a_{m_0})$. Then
choose a $\sigma\in \Dc_{m_0}$ such that 
$p_{\sigma(1)}=1$ and $p_{\sigma(m_0)}=1$, or choose a lexicographically
greatest $(p_{\sigma(1)},\ldots,p_{\sigma(m_0)})$, $\sigma\in \Dc_{m_0}$ with
$p_{\sigma(1)}=1$ or $p_{\sigma(m_0)}=1$.
\end{remar}

\begin{remar}\label{maxjunction}
For a vertex $v$, let $\ell(v)$ be the number of lines incident with $v$.
Without loss of generality we may assume that the line $H$ chosen in
Lemma \ref{wirbeg} contains a vertex $v$ with maximal $\ell(v)$.
This means that we only need to consider junctions of at most $\ell(v)$
lines during the algorithm.
\end{remar}

\section{Simplicial wirings}

\begin{defin}
A \emph{simplicial} wiring is a wiring in which all $2$-cells have exactly $3$ vertices.
A \emph{near pencil} consists of $n-1$ lines having one point in common and one further
line which is not incident with that point.
\end{defin}
Simplicial wirings are much easier to enumerate than arbitrary wir\-ings
because simpliciality gives many break conditions for the algorithm.
The easiest one is given by the following Lemma which should be folklore:
\begin{lemma}\label{twosquares}
If a simplicial wiring has two neighboring ordinary vertices (intersection points
where exactly two pseudolines meet), then it is a near pencil arrangement.
\end{lemma}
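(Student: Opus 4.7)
My plan is to analyze the two triangular $2$-cells $C_1$ and $C_2$ of the wiring that share the edge from $v_1$ to $v_2$ on $H$. Since $v_1$ is ordinary, only $H$ and $H_1$ meet at $v_1$, so the two edges of each $C_i$ incident to $v_1$ must lie on $H$ and on $H_1$; hence the third vertex of each $C_i$ lies on $H_1$. Symmetrically at $v_2$ it must lie on $H_2$. Because two distinct pseudolines intersect in a unique point of $\PP^2$, there is a single point $v_3 = H_1 \cap H_2$, and $v_3 \notin H$ (otherwise $v_3 \in H \cap H_1 = \{v_1\}$ and $v_3 \in H \cap H_2 = \{v_2\}$ would force the contradiction $v_1 = v_2$). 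Thus both $C_1$ and $C_2$ are triangles with the common vertex set $\{v_1, v_2, v_3\}$.

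The next step is the topological observation that $H_1$ carries no arrangement vertex other than $v_1$ and $v_3$. The cells $C_1, C_2$ are two distinct $2$-cells containing the edge from $v_1$ to $v_2$ in their boundary, so the edges of $C_1$ and of $C_2$ joining $v_1$ to $v_3$ are two distinct arcs of $H_1$. Topologically $H_1$ is a circle in $\PP^2$, which the two points $v_1, v_3$ cut into exactly two arcs; these must therefore be precisely those two edges. Since an edge of the cell decomposition contains no vertex in its interior, $H_1$ has just the two vertices $v_1$ and $v_3$, and symmetrically $H_2$ has only $v_2$ and $v_3$.

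To conclude, every pseudoline of the wiring crosses $H_1$ at some vertex, hence at $v_1$ or at $v_3$. As $v_1$ is ordinary, only $H$ and $H_1$ pass through it, so every pseudoline distinct from $H$ and $H_1$ must pass through $v_3$. Together with $H_1$ and $H_2$ themselves, this shows that all $n-1$ pseudolines different from $H$ are concurrent at $v_3$, while $H$ itself is not incident to $v_3$---exactly a near pencil. The step I expect to require the most care is the topological identification of the two edges from $v_1$ to $v_3$ as the two arcs of $H_1$ separated by $v_1$ and $v_3$; once this is in hand, the rest is a direct unwinding of the constraints imposed by simpliciality.
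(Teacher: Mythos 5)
Your proof is correct and complete. Note that the paper itself offers no proof of this lemma -- it is dismissed as ``folklore'' -- so there is nothing to compare against; your argument supplies exactly the missing folklore reasoning. The key steps all hold up: at the ordinary vertex $v_1$ the four local sectors are each bounded by one germ of $H$ and one germ of $H_1$, so the second edge of each of the two triangles flanking the edge $v_1v_2$ must indeed lie on $H_1$ (and symmetrically on $H_2$ at $v_2$), forcing both third vertices to be the unique point $v_3=H_1\cap H_2$. The step you flagged as delicate is also sound: since $C_1$ and $C_2$ lie on opposite sides of $H$ near $v_1$, their edges along $H_1$ leave $v_1$ in the two opposite directions along $H_1$, and since a pseudoline is a simple closed curve in $\PP^2$ which $v_1$ and $v_3$ cut into exactly two arcs, these two vertex-free edges exhaust $H_1$. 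From there the conclusion that every pseudoline other than $H$ meets $H_1$ at $v_3$ (it cannot meet it at the ordinary vertex $v_1$) is immediate, and $v_3\notin H$ rules out a full pencil, giving the near pencil. One could add a sentence dispatching the degenerate cases $n\le 3$, where the hypothesis of a simplicial cell decomposition is vacuous or the conclusion trivial, but this does not affect the substance.
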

So since the near pencil arrangements may be ignored without loss of generality,
we can stop the enumeration when two neighboring vertices are ordinary.

The following trivial relation has more applications than expected:

\begin{lemma}\label{joinends}
If a simplicial wiring fragment is complete, i.e.\ $\sigma_m=[n,\ldots,1]$,
then $v_m^i+s_{n-i}=3$ for all $i=0,\ldots,n$.
\end{lemma}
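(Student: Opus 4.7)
The plan is to interpret $v_m^i$ and $s_{n-i}$ as counting the vertices of the two halves of a single $2$-cell of the simplicial arrangement in $\PP^2$ corresponding to the complete wiring, and then to conclude from simpliciality that these halves together account for exactly $3$ vertices.

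Since $\sigma_m=[n,\ldots,1]$, the pseudoline entering at row $k$ on the left exits at row $n+1-k$ on the right. Hence the leftmost $2$-cell between rows $n-i$ and $n-i+1$ is bounded by the pair of pseudolines $\{L_{n-i},L_{n-i+1}\}$ starting at those rows, while the rightmost $2$-cell between rows $i$ and $i+1$ is bounded above by the pseudoline ending at row $i$, namely $L_{n+1-i}=L_{n-i+1}$, and below by $L_{n-i}$. Both cells are therefore bounded by the same pair $\{L_{n-i},L_{n-i+1}\}$, but in swapped vertical order; under the standard identification of the left and right boundaries of the wiring strip that reconstructs the arrangement in $\PP^2$, they glue into a single $2$-cell.

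Simpliciality now forces this common cell to be a triangle with exactly $3$ vertices, each of which is a junction strictly inside the strip and hence belongs to precisely one of the two halves. Summing gives $v_m^i+s_{n-i}=3$ for $i=1,\ldots,n-1$; the same argument applied to the wraparound polygon between rows $n$ and $1$ handles the remaining cases $i=0$ and $i=n$, using the built-in conventions $s_0=0$ (and the analogous interpretation for $s_n$ and $v_m^n$) so that the vertices of the wraparound triangle are not double-counted. The main obstacle I expect is verifying the geometric identification cleanly — matching the two halves under the projective gluing and handling the wraparound boundary with the right conventions — after which simpliciality closes the argument immediately.
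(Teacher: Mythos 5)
Your argument is correct and is exactly the reasoning the paper treats as implicit: it states Lemma \ref{joinends} as a ``trivial relation'' and gives no proof at all. Your identification of the last cell in channel $(i,i+1)$ with the first cell in channel $(n-i,n-i+1)$ under the projective gluing of the two ends of the strip, followed by the count of the $3$ vertices of the resulting simplicial $2$-cell split between the two halves, is the intended justification, and your handling of the wraparound cases $i=0,n$ via the conventions $s_0=0$ and the definition of $v_m^0$ is consistent with the paper's setup.
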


A further important improvement is:

\begin{lemma}\label{all2}
If we only enumerate simplicial wirings with Alg.\ \ref{EnumerateWirings2},
then in steps $4$ and $6$ it suffices to consider new pairs $(k,\ell)$ such that
$v_m^{k-1}<2$ and $v_m^\ell<2$.
\end{lemma}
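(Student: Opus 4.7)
The plan is to analyze, locally, what a new junction $(k,\ell)$ does to the open $2$-cells on either side of it, and then invoke the simplicial constraint that every $2$-cell has exactly three vertices. Appending $(k,\ell)$ creates a single new vertex $v$ where the lines in rows $k,k+1,\ldots,\ell$ meet. I would classify the adjacent cells into two types: the cells $C_i$ between rows $i$ and $i+1$ for $k\le i\le \ell-1$ have $v$ on both their upper and lower boundaries (since both rows $i$ and $i+1$ are involved in the junction), so $v$ is the closing vertex of each such cell; whereas the cells $C_{k-1}$ and $C_\ell$ just outside the junction have $v$ on exactly one boundary (row $k$, respectively row $\ell$, is involved, but row $k-1$, respectively row $\ell+1$, is not), so $v$ is a non-closing interior vertex and these two cells remain open after the junction.

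From simpliciality, $C_{k-1}$ must eventually have exactly three vertices. After appending $(k,\ell)$ the cell contains $v_m^{k-1}+1$ vertices and still needs a later closing vertex (a future junction that involves both rows $k-1$ and $k$). Hence $v_m^{k-1}+1+1\le 3$, which gives $v_m^{k-1}\le 1$, i.e.\ $v_m^{k-1}<2$. The same argument applied to $C_\ell$ yields $v_m^\ell<2$. A pair $(k,\ell)$ violating either inequality cannot be completed to a simplicial wiring, so Algorithm \ref{EnumerateWirings2} may safely skip it in steps 4 and 6.

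The edge cases $k=1$ and $\ell=n$ deserve a brief comment: the ``outside'' cell then becomes the wrap-around polygon whose vertex count is tracked by $v_m^0$ (using the cyclic identification $v_m^n=v_m^0$), and since we enumerate simplicial arrangements in the projective plane this polygon is also required to be a triangle, so the counting argument carries over verbatim. The main point to be careful about is not the inequality itself but the incidence claim underlying it: one should verify that $v$ really is a new vertex of $C_{k-1}$ (the lower boundary of this cell genuinely turns a corner at $v$, because the line occupying row $k$ is replaced at the junction since $a<b$ in Def.\ \ref{allowseq}) and that $v$ is distinct from the vertices already counted in $v_m^{k-1}$. Both facts are immediate from the definition of a junction, but they are worth stating explicitly so that the bookkeeping of the wiring fragment is unambiguous.
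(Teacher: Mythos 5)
Your proof is correct and is essentially the paper's argument: the paper's one-sentence proof says exactly that performing the move would push $v_m^{k-1}$ (resp.\ $v_m^{\ell}$) above $2$ in a cell that is still open, contradicting simpliciality. You merely spell out the bookkeeping in more detail, including the wrap-around cells, whose vertex count is controlled by Lemma \ref{joinends}.
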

\begin{proof}
Otherwise $v_m^{i-1}$ resp.\ $v_m^j$ would get greater than $2$
after the next move, contradicting simpliciality.
\end{proof}

Here are some more conditions:

\begin{lemma}\label{simpobstr_1}
In Algorithm \ref{EnumerateWirings2}, arriving at junction $k$ the enumeration will never
yield a simplicial wiring if one of the following conditions is satisfied:
\begin{enumerate}
\item\label{lem1} $a_k\ge 2$ and $\sigma_k(a_k)<\sigma_k(a_k-1)$ and
($\sigma_k(a_k-1)\ne n+2-a_k$ or $\sigma_k(a_k)\ne n+1-a_k$),
\item\label{lem1b} $b_k<n$ and $\sigma_k(b_k+1)<\sigma_k(b_k)$ and
($\sigma_k(b_k-1)\ne n+1-b_k$ or $\sigma_k(b_k+1)\ne n-b_k$).
\end{enumerate}
\end{lemma}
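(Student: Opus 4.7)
The plan is to focus on the 2-cell $C$ lying between rows $a_k-1$ and $a_k$ just after junction $k$, and to show that under condition (\ref{lem1}) the cell $C$ must acquire at least four vertices before it closes, contradicting simpliciality. Condition (\ref{lem1b}) is handled by the symmetric argument applied to the cell between rows $b_k$ and $b_k+1$, so I concentrate on (\ref{lem1}).

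First, junction $k$ is itself a vertex of $C$, since row $a_k$ (the lower boundary wire of $C$) participates in it; thus $v_k^{a_k-1}=v_{k-1}^{a_k-1}+1$. I claim $v_{k-1}^{a_k-1}\geq 1$, and hence $v_k^{a_k-1}\geq 2$. Suppose instead that $v_{k-1}^{a_k-1}=0$: then no earlier junction has been a vertex of $C$, so no earlier junction involved row $a_k-1$ or row $a_k$. Consequently $\sigma_{k-1}(a_k-1)=a_k-1$ and $\sigma_{k-1}(a_k)=a_k$. The allowability of junction $k$ makes $\sigma_{k-1}(a_k),\ldots,\sigma_{k-1}(b_k)$ a strictly increasing sequence of $b_k-a_k+1$ distinct integers starting at $a_k$, so $\sigma_{k-1}(b_k)\geq b_k$. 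But then $\sigma_k(a_k)=\sigma_{k-1}(b_k)\geq b_k>a_k-1=\sigma_k(a_k-1)$, contradicting $\sigma_k(a_k)<\sigma_k(a_k-1)$.

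Next I would show that $C$ needs at least two further vertices before it can close. Any junction that closes $C$ is an allowable pair $(a',b')$ with $a'\leq a_k-1$ and $b'\geq a_k$, and allowability forces $\sigma(a_k-1)<\sigma(a_k)$ at the moment it is applied. Since the present values $\sigma_k(a_k-1),\sigma_k(a_k)$ are strictly descending, no such closing junction is admissible at this stage: at least one boundary wire of $C$ must first be changed by an intermediate junction—one involving row $a_k-1$ or row $a_k$ but not both, since a junction involving both would again demand ascending values. That intermediate junction contributes one vertex to $C$, and the eventual closing junction contributes another; hence $C$ ends up with at least $v_k^{a_k-1}+2\geq 4$ vertices, contradicting simpliciality.

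The excluded case is precisely the configuration in which the two wires at rows $a_k-1$ and $a_k$ already sit at their final positions, so they need never move and $C$ closes at the right edge of the wiring (the point at infinity in $\PP^2$) rather than at some future junction—this is why the counting above does not apply there, and why the lemma excludes it. The main conceptual obstacle is the bookkeeping for the cell structure of a wiring diagram: one must distinguish cleanly between junctions that merely add a vertex to $C$ (those involving exactly one of the two boundary rows) and those that close it (those involving both), and then observe that the strict descent $\sigma_k(a_k)<\sigma_k(a_k-1)$ rules out any immediate closing. Once these two combinatorial facts are in hand, the counting is forced.
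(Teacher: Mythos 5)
Your proof is correct and follows essentially the same route as the paper: junction $k$ brings the count of finished vertices of the cell between rows $a_k-1$ and $a_k$ to at least $2$, the descent $\sigma_k(a_k)<\sigma_k(a_k-1)$ blocks any immediate closing junction, and hence either both wires are already at their terminal positions (the excluded configuration) or the cell accumulates more than three vertices. In fact you supply two details the paper leaves implicit — why $v_{k-1}^{a_k-1}\ge 1$ under the descent hypothesis, and the distinction between junctions that close the cell and those that merely add a vertex to it; the only loose end is the sub-case where, after an intermediate one-sided junction, the cell closes by wrapping around at the right edge rather than at a later junction, which is disposed of just as quickly via Lemma \ref{joinends} since $s_{n-a_k+1}\ge 1$.
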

\begin{proof}
Notice first that since the last move concerned the lines in the rows $a_k,\ldots,b_k$,
the numbers of finished vertices $v_k^i$ between the rows $i$ and $i+1$ for
$a_k\le i<b_k$ are all equal to $1$ whereas $v_k^{a_k-1}=2=v_k^{b_k}$.

Now if $\sigma_k(a_k)<\sigma_k(a_k-1)$, then the rows $k$ and $k-1$ will
not be moved anymore since otherwise $v_k^{a_k-1}>2$ which is contradicting the
simpliciality. Thus in this case we know that 
$\sigma_k(a_k-1)=n+2-a_k$ and $\sigma_k(a_k)=n+1-a_k$,
hence (\ref{lem1}). The case (\ref{lem1b}) is similar.
\end{proof}

\begin{lemma}\label{simpobstr_2}
In Algorithm \ref{EnumerateWirings2}, arriving at junction $k$ the enumeration will never
yield a simplicial wiring if one of the following conditions is satisfied:
\begin{enumerate}
\item there is a $j\in\{a_k,\ldots,b_k\}$ with
$\sigma_k(j)=\sigma_k(j+1)+1$ and $\sigma_k(j)\ne n+1-j$ and $s_{\sigma_k(j+1)}=2$,
\item there is a $j\in\{a_k+1,\ldots,b_k\}$ with
$\sigma_k(j)=\sigma_k(j+1)+1$ and $\sigma_k(j-1)\ne \sigma_k(j)+1$ and $s_{\sigma_k(j)}=2$
and $s_{\sigma_k(j+1)}=2$,
\item there is a $j\in\{a_k,\ldots,b_k-1\}$ with
$\sigma_k(j)=\sigma_k(j+1)+1$ and $\sigma_k(j+2)\ne \sigma_k(j)-2$ and $s_{\sigma_k(j)-2}=2$
and $s_{\sigma_k(j+1)}=2$.
\end{enumerate}
\end{lemma}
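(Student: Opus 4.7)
The plan is to mirror the argument of Lemma \ref{simpobstr_1}: in each of the three listed configurations I would pin down a specific bounded $2$--cell of the partial wiring whose vertex count is already two and whose remaining sides are effectively frozen, so that no continuation of Algorithm \ref{EnumerateWirings2} can bring it up to a triangle.

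First I would unpack the combinatorial content of the common hypothesis $\sigma_k(j)=\sigma_k(j+1)+1$. Since $\sigma_0=\id$, the lines with labels $\sigma_k(j+1)$ and $\sigma_k(j+1)+1=\sigma_k(j)$ started in rows $\sigma_k(j+1)$ and $\sigma_k(j+1)+1$, and are now in rows $j+1$ and $j$ respectively; so they have crossed exactly once, and the leftmost $2$--cell between them — the one whose vertex count is recorded by $s_{\sigma_k(j+1)}$ — is already closed off on the right. Its three possible vertices are: the two crossings that bound it on the left at row $\sigma_k(j+1)$ and at row $\sigma_k(j+1)+1$, and the single crossing of these two lines with each other; no further move can contribute to it.

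For case (1), the hypothesis $s_{\sigma_k(j+1)}=2$ says the cell has only two of these three vertices, while $\sigma_k(j)\neq n+1-j$ combined with $j\in\{a_k,\ldots,b_k\}$ forces line $\sigma_k(j)$ to participate in at least one further move below row $j$; by Lemma~\ref{simpobstr_1}\,(\ref{lem1}) applied at row $j$, this move (and hence all subsequent moves) can no longer touch rows $j$ or $j+1$, so the missing third vertex will never be added. Cases (2) and (3) are handled in the same spirit but the cell in question is one step to the side: in (2) the additional hypotheses $\sigma_k(j-1)\neq\sigma_k(j)+1$ and $s_{\sigma_k(j)}=2$ say that a second neighbouring $2$--cell is also stuck at two vertices and that the potential third vertex coming from the left is excluded; in (3), symmetrically, the condition $\sigma_k(j+2)\neq\sigma_k(j)-2$ together with $s_{\sigma_k(j)-2}=2$ blocks completion from the right. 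In both cases one concludes that at least one of the two cells cannot be brought to three vertices.

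The main obstacle will be the bookkeeping in the second paragraph: one has to enumerate every move $(a_{k+1},b_{k+1})$ which is still legal under Algorithm~\ref{EnumerateWirings2} together with Lemmas~\ref{all2} and~\ref{simpobstr_1}, and check that none of them places a fresh vertex on the boundary of the identified $2$--cell. This is a purely local case analysis around rows $j-1,j,j+1,j+2$, but keeping track of which rows are already in their final positions and which remain active is what makes the verification slightly delicate; otherwise the argument reduces to the same vertex-counting principle as Lemma~\ref{simpobstr_1}.
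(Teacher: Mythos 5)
Your overall strategy (vertex counting on one distinguished $2$-cell) is in the right ballpark, but the cell you single out is described incorrectly, and this breaks the argument. Write $p=\sigma_k(j+1)$, so the two lines in question are $p$ and $p+1$. The cell whose vertex count is $s_p$ is the \emph{first} cell between rows $p$ and $p+1$; in the affine picture it is unbounded to the left, and in $\PP^2$ it passes through the line at infinity and reappears as the \emph{last} cell between rows $n-p$ and $n-p+1$ (the terminal rows of the two lines). It is therefore not ``closed off on the right,'' and further moves most certainly can contribute vertices to it --- that is exactly what Lemma~\ref{joinends} records via $v_m^{n-p}+s_p=3$. Note also that if your description were correct, the single hypothesis $s_p=2$ would already contradict simpliciality (a closed $2$-cell stuck at two vertices), and the conditions $\sigma_k(j)=\sigma_k(j+1)+1$ and $\sigma_k(j)\ne n+1-j$ would be superfluous; this is a clear sign the picture is off.

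The paper's actual argument for (1) runs through the right-hand piece of this projective cell: the hypothesis $\sigma_k(j)\ne n+1-j$ says the two lines are not yet in their terminal rows, so each of them must still pass through at least one further junction. Their respective last junctions are vertices of the last cell between rows $n-p$ and $n-p+1$, and they are \emph{distinct} junctions because $p$ and $p+1$ have already crossed and hence can never again belong to a common move. Thus that cell acquires at least two vertices from the right, and Lemma~\ref{joinends} forces $s_p\le 1$, contradicting $s_p=2$. Your appeal to Lemma~\ref{simpobstr_1}\,(\ref{lem1}) ``at row $j$'' does not work: that lemma's hypothesis concerns $\sigma_k(a_k-1)$ and $\sigma_k(a_k)$, not an arbitrary row $j$, and your conclusion is self-contradictory --- you require the line in row $j$ to take part in a further move while simultaneously asserting that no further move may touch row $j$. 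Cases (2) and (3) inherit the same problem, since they rest on the same description of the cells counted by $s_{\sigma_k(j)}$ and $s_{\sigma_k(j)-2}$.
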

\begin{proof}
We prove (1); (2) and (3) are similar.
If $j\in\{a_k,\ldots,b_k\}$ and $\sigma_k(j)=\sigma_k(j+1)+1$, then
either the labels $\sigma_k(j)$ and $\sigma_k(j+1)$ are at their terminal position in
which case $\sigma_k(j)=n+1-j$, or they are not, but then the cell that these
labels will enclose at the end will have at least $2$ vertices which implies
$s_{\sigma_k(j+1)}=1$ by Lemma \ref{joinends}.
\end{proof}

\begin{lemma}\label{simpobstr_5}
In Algorithm \ref{EnumerateWirings2}, arriving at junction $k$ the enumeration will never
yield a simplicial wiring if one of the following conditions is satisfied:
\begin{enumerate}
\item $s_{n-u_k}+v_k^{u_k}=3$,
\item $d_k>1$ and $s_{n-1-d_k}+v_k^{d_k-1}=3$.
\end{enumerate}
\end{lemma}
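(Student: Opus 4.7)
The plan is to derive a contradiction with Lemma \ref{joinends} in each case by showing that the stated hypothesis forces a strict later increase of one of the quantities $v^{u_k}$ or $v^{d_k-1}$, which would push the final sum in Lemma \ref{joinends} above $3$. The two parts are dual, so I focus on (1).

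First, because $\sigma_k(i)=n+1-i$ for $i>u_k$, the labels occupying positions $u_k+1,\ldots,n$ are exactly $1,\ldots,n-u_k$, so $\sigma_k(u_k)\in\{n-u_k+1,\ldots,n\}$; maximality of $u_k$ excludes $\sigma_k(u_k)=n+1-u_k$, whence $\sigma_k(u_k)>\sigma_k(u_k+1)=n-u_k$. This rules out every future move $(a,b)$ with $b>u_k$: the step $b=u_k+1$ would demand $\sigma_k(u_k)<\sigma_k(u_k+1)$, and $b>u_k+1$ would require an increasing run through the strictly decreasing terminal labels past $u_k$. On the other hand, $\sigma_k(u_k)$ is not yet at its terminal position $n+1-\sigma_k(u_k)<u_k$, so at some later stage a move must involve position $u_k$, and the only possibility left is $(a,u_k)$ with $a<u_k$. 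Its junction lies on line $u_k$ but not on line $u_k+1$, i.e.\ only on the top boundary of the band between rows $u_k$ and $u_k+1$; it therefore contributes exactly one new vertex to the current last $2$-cell of that band without closing it, so $v^{u_k}$ strictly increases before the wiring is completed.

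Since $s_{n-u_k}$ is non-decreasing (once a vertex of the first $2$-cell is counted it stays counted), the completed wiring must satisfy $v_m^{u_k}+s_{n-u_k}>v_k^{u_k}+s_{n-u_k}=3$, contradicting Lemma \ref{joinends}. Part (2) is entirely analogous: $d_k>1$ fixes positions $1,\ldots,d_k-1$ at their terminal values, so $\sigma_k(d_k)\le n-d_k$ must still be moved downward, forcing a future move $(d_k,b)$ with $b>d_k$; its junction lies only on line $d_k$ (the bottom boundary of the band between rows $d_k-1$ and $d_k$), adds a vertex to that band's last $2$-cell and strictly increases $v^{d_k-1}$, again contradicting Lemma \ref{joinends}.

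The main delicacy is confirming that the obligatory future move genuinely augments the relevant $v^i$ rather than resetting it by closing the last $2$-cell; this is ensured precisely because the forced junction touches only one of the two boundary rows of the band in question (line $u_k$ but not $u_k+1$, respectively line $d_k$ but not $d_k-1$), and the preceding analysis already excludes any intermediate move that could have reset the cell beforehand.
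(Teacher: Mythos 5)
Your argument is correct and is precisely the paper's (one‑line) proof written out in full: the paper likewise observes that the last cell of the band $(u_k,u_k+1)$ resp.\ $(d_k-1,d_k)$ is unfinished, that a future junction must therefore strictly increase $v^{u_k}$ resp.\ $v^{d_k-1}$ without closing that cell, and concludes by Lemma \ref{joinends}; your extra care about why no future move can cross the boundary row and why the forced junction cannot reset the cell is exactly the content the paper leaves implicit. Two cosmetic points: you appeal to \emph{maximality} of $u_k$ where the definition makes $u_k$ \emph{minimal} (it is $d_k$ that is maximal), and matching part (2) against Lemma \ref{joinends} in fact requires the index $n-d_k+1$ rather than the printed $n-1-d_k$ — an apparent typo in the statement that your argument silently corrects, just as the paper's own proof does.
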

\begin{proof}
The cells at the end of the rows $(u_k,u_k+1)$ and $(d_k-1,d_k)$ are not finished yet,
future change will increase $v_k^{u_k}$ resp.\ $v_k^{d_k-1}$,
contradicting Lemma \ref{joinends}.
\end{proof}

\begin{lemma}\label{simpobstr_4}
In Algorithm \ref{EnumerateWirings2}, arriving at junction $k$ the enumeration will never
yield a simplicial wiring if one of the following conditions is satisfied:
\begin{enumerate}
\item $\sigma_k(b_k)>\sigma(b_k+1)$ and $b_k\le u_k$,
\item $d_k<a_k\le u_k$ and $\sigma_k(a_k-1)>\sigma(a_k)$ and
($v_k(n+1-a_k)\ne 1$ or $\sigma_k(d_k)<\sigma_k(a_k)$ or $\sigma_k(a_k-2)<\sigma_k(a_k-1)$
or there is a $j\in\{ b_k+1,\ldots,u_k\}$ with $\sigma_k(j)>\sigma_k(a_k)$).
\end{enumerate}
\end{lemma}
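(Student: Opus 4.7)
My plan is to mirror the strategy used in Lemmas \ref{simpobstr_1} and \ref{simpobstr_2}: each part identifies a cell adjacent to the last junction which has $v_k=2$ immediately after move $k$, and shows that the stated hypotheses forbid it from ever being completed to a triangle.

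For part (1), I would focus on the cell $C$ between rows $b_k$ and $b_k+1$, which satisfies $v_k^{b_k}=2$ by the observation already used in the proof of Lemma \ref{simpobstr_1}. Since $b_k\le u_k$, the ``persist-until-the-end'' scenario (with final count $v+s_{n-b_k}=3$ from Lemma \ref{joinends}) is blocked by Lemma \ref{simpobstr_5}, so $C$ must close through some future swap $(a',b')$ with $a'\le b_k$ and $b'\ge b_k+1$. Such a swap requires the ascending condition $\sigma(b_k)<\sigma(b_k+1)$, but currently $\sigma_k(b_k)>\sigma_k(b_k+1)$. The inequality can only be reversed by decreasing $\sigma(b_k)$ (which requires $b_k$ to be the right endpoint of some intermediate swap, necessarily not containing $b_k+1$) or by increasing $\sigma(b_k+1)$ (which requires $b_k+1$ to be the left endpoint, necessarily not containing $b_k$); in either scenario the junction of the intermediate swap is a corner of $C$, pushing $v^{b_k}$ to $3$. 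The subsequent closing swap would then give $v=4$, contradicting simpliciality.

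For part (2), the analogous target is the cell $C'$ between rows $a_k-1$ and $a_k$, which has $v_k^{a_k-1}=2$. The hypotheses $d_k<a_k$ and $a_k\le u_k$ again keep both rows in the active zone, excluding the persist-to-end scenario. The reason why $\sigma_k(a_k-1)>\sigma_k(a_k)$ cannot be reversed without first inflating $v^{a_k-1}$ is the same in spirit as in (1), but on the left side there are several distinct avenues by which the reversal might occur, and the four disjunctive sub-conditions are designed to close off each of them in turn: the clause on $v_k(n+1-a_k)$ uses Lemma \ref{joinends} to block a symmetric ``mirror'' escape coming through the first cell counterpart; the clause on $\sigma_k(d_k)$ rules out reshuffles that reach back into the already-terminalised block $1,\ldots,d_k-1$; the clause on $\sigma_k(a_k-2)$ prevents the pair $(a_k-2,a_k-1)$ from supplying an immediately usable ascending run that would close $C'$ via a swap ending at $a_k$; and the existence of some $j\in\{b_k+1,\ldots,u_k\}$ with $\sigma_k(j)>\sigma_k(a_k)$ would otherwise permit a large value to migrate across the current swap range and enable a delayed reversal without incrementing $v^{a_k-1}$. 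I would handle each sub-case in turn by the same obstruction-counting argument as in (1): either the inequality cannot be reversed at all, or reversing it forces an auxiliary junction at a corner of $C'$ and thereby pushes $v^{a_k-1}$ to $3$ before any closing swap can occur.

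The main obstacle is the enumeration step of part (2): one must verify that the four clauses really constitute an exhaustive list of loopholes on the left side. Unlike the right-side argument, which has essentially only two atomic escape moves to consider, the left side can interact with the fixed terminal block bounded by $d_k$ and with the values occupying positions between $b_k+1$ and $u_k$, so checking that no fifth escape route has been overlooked---and that each clause targets precisely its own route---is where the delicate bookkeeping lies. Once that exhaustiveness is established, the individual sub-case proofs themselves reduce to short local arguments analogous to part (1).
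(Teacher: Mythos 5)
Your proposal correctly extracts one ingredient that the paper also uses — namely that $\sigma_k(b_k)>\sigma_k(b_k+1)$ (resp.\ $\sigma_k(a_k-1)>\sigma_k(a_k)$) forces the two adjacent rows to be \emph{frozen} for the rest of the enumeration, by the vertex-counting argument of Lemma \ref{simpobstr_1} — but it then draws the wrong kind of conclusion from it. You try to turn both parts into local simpliciality violations (``the cell $C$ would acquire a fourth vertex''), whereas the actual obstruction is that the fragment can never be \emph{completed}. Concretely, in part (1) your argument only shows that the cell between rows $b_k$ and $b_k+1$ is never closed by a future junction. That is not a contradiction: a cell that is never closed simply runs to the right edge of the diagram and, by Lemma \ref{joinends}, is matched with a left-edge cell so that $v_m^{b_k}+s_{n-b_k}=3$; this is perfectly compatible with simpliciality when $s_{n-b_k}=1$. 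Your appeal to Lemma \ref{simpobstr_5} to exclude this ``persist-to-the-end'' case does not work: that lemma concerns only the cells at rows $(u_k,u_k+1)$ and $(d_k-1,d_k)$ and says nothing about rows $(b_k,b_k+1)$ unless $b_k=u_k$, and even then only under a hypothesis ($s_{n-u_k}+v_k^{u_k}=3$) that is not implied here. The missing step is structural: since row $b_k$ is frozen, Algorithm \ref{EnumerateWirings2} can never again perform a junction $(b_m,i)$ ascending through it, so all future junctions stay at or below $a_k$; but $b_k\le u_k$ means some row $\ge b_k$ is not yet in its terminal position and never will be, so $\sigma$ never reaches $[n,\ldots,1]$. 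This is where the hypothesis $b_k\le u_k$ is actually used, and your proof uses it nowhere that withstands scrutiny.

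Part (2) inherits the same problem and adds another. The four disjuncts are not four ``escape routes for reversing the inequality'' whose exhaustiveness needs to be verified; each one is, separately, a direct necessary condition for completability once one knows that the wires at rows $a_k-1$ and $a_k$ are pinned forever. For instance, the last disjunct encodes that every wire destined to end up on the far side of row $a_k$ must already be there (future moves take place entirely above or entirely below the pinned pair, so no wire can cross it); $\sigma_k(a_k-2)<\sigma_k(a_k-1)$ would demand a future crossing with the pinned row $a_k-1$; $\sigma_k(d_k)<\sigma_k(a_k)$ is a special case of the side condition at a row known not to be terminal; and the clause on $v_k(n+1-a_k)$ is Lemma \ref{joinends} applied to the now-eternal cell between rows $a_k-1$ and $a_k$, which is stuck at $v=2$ and hence needs its left-edge partner to contribute exactly one vertex. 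Your readings of these clauses (``mirror escape,'' ``immediately usable ascending run closing $C'$'') do not match this mechanism, and you yourself flag the exhaustiveness of your case list as unresolved — a gap that disappears entirely once the lemma is read as a completability obstruction rather than a cell-counting one.
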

\begin{proof}
(1) If $\sigma_k(b_k)>\sigma(b_k+1)$ and $b_k\le u_k$ then by
Lemma \ref{simpobstr_1} (\ref{lem1b}), the rows $b_k$ and $b_k+1$ will not
change anymore. Thus Alg.\ \ref{EnumerateWirings2} would only perform moves
$(i,j)$ with $i<j\le a_k$ from now on. This would never complete the fragment
since $b_k\le u_k$.

(2) If $d_k<a_k\le u_k$ and $\sigma_k(a_k-1)>\sigma(a_k)$ then the situation is
slightly different because Alg.\ \ref{EnumerateWirings2} also jumps past $a_k$.
But what we know is that the rows $a_k-1$ and $a_k$ are finished, so all future
moves will take place above or below $a_k$. This means that the fragment can
only be completed if $\sigma_k(j)>\sigma_k(a_k)$ for all $j>b_k$ and
$\sigma_k(j)<\sigma_k(a_k)$ for all $j<a_k$, which explains the last part
(these two conditions are equivalent). The other conditions are easy to check.
\end{proof}

Notice that one has to carefully choose the obstructions in the implementation
because some of them do not spare enough time to compensate the time they consume.
For example, the following are apparently not good enough
(we therefore omit the proof):

\begin{lemma}\label{simpobstr_7}
In Algorithm \ref{EnumerateWirings2}, arriving at junction $k$ the enumeration will never
yield a simplicial wiring if one of the following conditions is satisfied:
\item \begin{enumerate}
\item $a_k=d_k+1$ and $\sigma_k(d_k+1)=n-1-d_k$,
\item $u_k>1$ and $\sigma_k(u_k)=n+2-u_k$ and $\sigma_k(u_k-1)=n+3-u_k$,
\item $d_k\le n-1$ and $\sigma_k(d_k)=n-d_k$ and $\sigma_k(d_k+1)=n-1-d_k$,
\item $\sigma_k(1)=2$ and $b_k+2<n$ and $\sigma_k(b_k)=3$
and $\sigma_k(b_k+1)=4$ and $\sigma_k(b_k+2)=n$.
\end{enumerate}
\end{lemma}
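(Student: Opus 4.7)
The plan is to handle each of the four conditions in turn, in the spirit of Lemmas \ref{simpobstr_1}, \ref{simpobstr_2}, \ref{simpobstr_5}, and \ref{simpobstr_4}. For each condition, I would first use the hypothesis together with the definitions of $d_k,u_k$ to narrow down which moves the algorithm can still perform (both the immediately next move, constrained by the rule ``$b_i=a_{i+1}$ or $b_{i+1}\le a_i$'' defining $W'_n$, and all subsequent moves). Then, tracing the labels that must still travel to their terminal positions, I aim to show that every continuation either runs out of allowable moves or forces some cell to accumulate more than three vertices, contradicting Lemma \ref{all2} (or directly the simpliciality of the completion).

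For (1), since $a_k=d_k+1$, the last move placed the labels at rows $d_k+1,\ldots,b_k$ in strictly decreasing order, with $\sigma_k(d_k+1)=n-1-d_k$ being the largest. Hence the label $n-d_k$ (terminal value for row $d_k+1$) is not at rows $d_k+1,\ldots,b_k$, and by the definition of $d_k$ it cannot be forced to be at row $d_k$ either. I would split into the subcases $\sigma_k(d_k)=n-d_k$ and $\sigma_k(d_k)\le n-d_k-2$. In the first subcase, one checks that every admissible continuation (a chain $(b_k,j),(j,j'),\ldots$ followed eventually by a ``below'' move $(i,j')$ with $j'\le$ the current $a$) cannot touch rows $d_k,d_k+1$ simultaneously, because the two labels are in decreasing order and the rows $d_k+2,\ldots,b_k$ remain decreasing, ruling out the needed swap; so no completion exists. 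In the second subcase, the only way to bring $n-d_k$ to row $d_k+1$ eventually forces a move $(d_k,d_k+1)$, but by the argument of Lemma \ref{simpobstr_1} the cell between rows $d_k$ and $d_k+1$ already has $v_k^{d_k}\ge 2$ after the move $(a_k,b_k)$, so the additional vertices created by the intermediate moves push the count past $2$ before the cell can close. Statement (2) follows from the same argument applied at the bottom, with $u_k$ playing the role of $d_k$.

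For (3), the pair $\sigma_k(d_k)=n-d_k,\ \sigma_k(d_k+1)=n-1-d_k$ displaces labels $n-d_k+1$ and $n-d_k$ one row downward from their terminal positions, and $\sigma_k(d_k)>\sigma_k(d_k+1)$ blocks the direct swap between rows $d_k$ and $d_k+1$; a case analysis along the same lines as (1) yields the obstruction. For (4), the local pattern $\sigma_k(1)=2$, $\sigma_k(b_k)=3$, $\sigma_k(b_k+1)=4$, $\sigma_k(b_k+2)=n$ with $b_k+2<n$ means that labels $2,3,4$ occupy rows $1,b_k,b_k+1$ while label $n$ already sits at row $b_k+2$ instead of at its terminal row $1$; the only way to transport $n$ upward through rows $b_k+1,b_k,\ldots,1$ to its terminal position while simultaneously bringing $1$ downward forces either an inadmissible move (because rows $b_k,b_k+1$ are already in decreasing order with $\sigma_k(b_k)<\sigma_k(b_k+1)$ but out of increasing alignment with row $b_k+2$) or a cell in the top strip to collect a fourth vertex, in either case contradicting Lemma \ref{joinends} applied to the would-be completion.

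The hard part will be the subcase analysis in (1) and (3), where one must rule out an entire family of alternating chain and below moves. Unlike the essentially one-step arguments of Lemmas \ref{simpobstr_1} and \ref{simpobstr_5}, these obstructions only become visible after several hypothetical future moves have been composed, which is presumably why the authors remark that checking these conditions does not pay off enough in the implementation to justify the bookkeeping cost.
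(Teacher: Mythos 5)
The first thing to say is that the paper contains no proof of this lemma to compare yours against: the author states explicitly that these particular obstructions ``are apparently not good enough'' to be worth implementing and that the proof is therefore omitted. Your write-up must therefore stand on its own, and as written it does not: it is a plan, not a proof. In each of the four parts the decisive step is asserted rather than carried out (``one checks that every admissible continuation \dots cannot touch rows $d_k,d_k+1$ simultaneously'', ``a case analysis along the same lines as (1) yields the obstruction'', ``forces either an inadmissible move \dots or a cell \dots to collect a fourth vertex''). These multi-move lookahead analyses are exactly the content of the lemma — as you yourself note in your closing paragraph — so deferring them leaves the proof empty precisely where it matters.

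Where you do commit to details, several are wrong. In (1), first subcase ($\sigma_k(d_k)=n-d_k$), you conclude ``no completion exists'' from the premise that rows $d_k+2,\ldots,b_k$ ``remain decreasing''; but future junctions freely modify those rows (membership in $W'_n$ does not freeze them), and the permutation \emph{can} in general be completed, e.g.\ by bringing the label $n+1-d_k$ up to row $d_k+2$, performing $(d_k+1,d_k+2)$ and then $(d_k,d_k+1)$ — so any obstruction here must come from vertex counts of the cell between rows $d_k$ and $d_k+1$, which you do not track. Your reduction of (2) to (1) is also misdirected: (1) assumes $a_k=d_k+1$ and concerns one displaced label, whereas (2) places two labels, each one row below its terminal position, at rows $u_k-1,u_k$ with no hypothesis on $a_k,b_k$; structurally (2) mirrors (3), not (1), and in fact (2) admits a direct one-step argument in the style of Lemma \ref{simpobstr_4}: both adjacent pairs $(u_k-1,u_k)$ and $(u_k,u_k+1)$ are in decreasing order, so no future junction can contain row $u_k$, which is thus frozen with the wrong label $n+2-u_k$. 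Finally, in (4) the phrase ``in decreasing order with $\sigma_k(b_k)<\sigma_k(b_k+1)$'' is self-contradictory; since $3<4<n$ the rows $b_k,b_k+1,b_k+2$ are in \emph{increasing} order and $(b_k,b_k+2)$ is a perfectly admissible next junction, so the ``inadmissible move'' branch of your dichotomy is vacuous and the entire burden falls on the unproved vertex-counting branch.
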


We now give the algorithm for the simplicial case.
It proceeds in two steps:
\begin{itemize}
\item Compute a list of beginnings as described in Lemma \ref{wirbeg} and
choose best representatives as proposed in Lemma \ref{wirbeg_dihedral} and
Rem.\ \ref{goodchoice}.
\item For each representative of beginnings, create a wiring fragment $\omega$.
Call ``{\bf EnumerateSimplicialWirings} $(\omega,\ell)$'' defined below,
where $\ell$ is the maximum of the $\ell(v)$ for $v$ a vertex in $\omega$ (see
Rem.\ \ref{maxjunction}). Notice that this step can easily be parallelized.
\end{itemize}

\algo{EnumerateSimplicialWirings}{$\omega$,$\ell$}
{Enumerates simplicial wirings starting by $\omega$ with maximal $\ell(v)=\ell$,
at least one from each isomorphism class}
{A wiring fragment $\omega$, $\ell\in\NN$}
{List of completions of the fragment $\omega$}
{
\item Compute the numbers $d_m$ and $u_m$ for $\omega$.
\item If $d_m=u_m$, then if $v_m^i+\varepsilon_m^{n-i}=3$ for all $i=0,\ldots,n$
return $\{\omega\}$, else return $\emptyset$.
\item Check the obstructions of Lemma \ref{simpobstr_1}, \ref{simpobstr_2}, \ref{simpobstr_5},
\ref{simpobstr_4} and return $\emptyset$ if one of them is satisfied.
\item $R\leftarrow \emptyset$.
\item If $b_m\le u_m$ and $v_m^{b_m-1}\le 1$ then find the greatest $i$ with
$\sigma_m(b_m)<\sigma_m(b_m+1)<\ldots<\sigma_m(i)$ (see Lemma \ref{all2}).\\
If $i-b_m<\ell$,
update all the data to a new fragment $\omega'$ with $\Spairs'(m+1)=(b_m,i)$,
and call
\[ R:=R\:\cup\:\text{\bf EnumerateSimplicialWirings}(\omega',\ell).\]
Use $s_i$ to ensure that Lemma \ref{twosquares} is satisfied.
\item If $\sigma_m(b_m)=n-u_m$ or $d_m\ge a_m$ then return $R$.
\item For all $d_m\le i<j\le a_m$ with $j-i<\ell$ do:
\item If $\sigma_m(i)<\sigma_m(i+1)<\ldots<\sigma_m(j-1)<\sigma_m(j)$,
$v_m^{i-1}=1$ and $v_m^{j}=1$ (see Lemma \ref{all2}), then
update all the data to a new fragment $\omega'$ with $\Spairs'(m+1)=(i,j)$,
and call
\[ R:=R\:\cup\:\text{\bf EnumerateSimplicialWirings}(\omega',\ell).\]
\item Return $R$.
}

When the enumeration is complete, one still has to collect the
wirings up to isomorphisms. We use the following observation:
\begin{lemma}
Let $\Ac$ and $\Ac'$ be simplicial arrangements. Then
$\Ac$ and $\Ac'$ are isomorphic if and only if the graphs given by
the corresponding triangulations are isomorphic
(we do not need to require a bijection between the $2$-cells preserving the incidence).
\end{lemma}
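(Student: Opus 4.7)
The forward direction is immediate: an isomorphism of arrangements $\Ac \to \Ac'$ sends vertices to vertices and edges to edges of the respective triangulations, inducing a graph isomorphism. My task is to prove the converse, namely that a graph isomorphism $\phi : G \to G'$ of the $1$-skeletons extends to an arrangement isomorphism. The plan is to show that the entire combinatorial structure of $\Ac$ (its $2$-cells and its pseudolines) can be reconstructed from $G$ alone, so that $\phi$ automatically transports these to the corresponding data of $\Ac'$.

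The first step is to recover the $2$-cells from $G$. After handling the degenerate small cases (in particular near pencils, where the statement is trivial, and small arrangements where the answer can be checked directly by enumeration), I may assume that $G$ is $3$-connected. Then by the Whitney-type uniqueness theorem for $2$-cell embeddings of $3$-connected graphs into a fixed surface, $G$ admits a unique embedding into $\PP^2$ up to reflection; consequently its set of $2$-cells is determined by $G$, and by simpliciality these cells are exactly the triangular faces of the triangulation.

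Knowing the $2$-cells determines the rotation system at every vertex $v$: two edges at $v$ are cyclically consecutive iff they bound a common triangular face. Now comes the key reconstruction of the pseudolines. At a vertex $v$ of degree $2k$ there are $k$ pseudolines through $v$; since each pseudoline passes locally ``straight through'' $v$, the two edges of a given pseudoline at $v$ are exactly opposite (separated by $k$ positions) in the cyclic order around $v$. This opposite-pairing rule, applied vertex by vertex, groups edges into pseudolines. Thus the pseudoline structure is recovered from $G$ together with its (essentially unique) embedding.

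Consequently, the graph isomorphism $\phi$ carries the rotation system of $G$ to that of $G'$ (up to global reflection), hence $2$-cells to $2$-cells and pseudoline pairings to pseudoline pairings, yielding an isomorphism $\Ac \to \Ac'$ of arrangements. The main obstacle I expect is the embedding-uniqueness step: one must argue carefully that a $3$-connected graph arising as the $1$-skeleton of a simplicial arrangement really does have a unique $2$-cell embedding in $\PP^2$, and separately dispose of the low-complexity cases where $3$-connectivity may fail (near pencils, arrangements with very few lines), for which a direct enumeration suffices.
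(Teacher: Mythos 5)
There is a genuine gap at the step you yourself flag as the main obstacle. The ``Whitney-type uniqueness theorem'' you invoke is a theorem about the sphere: a $3$-connected planar graph has an essentially unique embedding in $S^2$. For the projective plane no such statement holds at the level of $3$-connectivity; $3$-connected (indeed more highly connected) graphs can admit combinatorially inequivalent $2$-cell embeddings in $\PP^2$, and the known uniqueness results (Robertson--Vitray, Mohar) require lower bounds on the face-width of the embedding, hypotheses you neither state nor verify for the graphs of simplicial arrangements. Since everything downstream (the rotation system, the opposite-edge pairing that recovers the pseudolines) depends on having pinned down the embedding, the converse direction is not established. A secondary remark: even if the reconstruction went through, recovering the pseudolines is more than the lemma asks for, since isomorphism of arrangements here means combinatorial isomorphism of the induced cell decompositions of $\PP^2$.

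The paper's proof sidesteps embeddings entirely with a one-line observation that you missed: it suffices to show that every $3$-clique $\{v_1,v_2,v_3\}$ of the graph is a $2$-cell. Each edge of the triangulation is a segment of a pseudoline containing no vertex in its interior, so a pseudoline crossing the region spanned by $v_1,v_2,v_3$ cannot cross an edge transversally and would have to pass through two of the three vertices; but any two of these vertices already lie on a common pseudoline (the one carrying the edge joining them), and two distinct pseudolines meet only once. Hence the $2$-cells are exactly the triangles of the abstract graph, so the full cell complex, with all incidences, is determined by the graph alone, and any graph isomorphism is automatically an isomorphism of arrangements. I would recommend replacing your embedding argument with this direct identification of $2$-cells with $3$-cliques.
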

\begin{proof}
It suffices to prove that for vertices $v_1$, $v_2$, $v_3$ such that
$(v_1,v_2)$, $(v_2,v_3)$, $(v_1,v_3)$ are edges, the triple $(v_1,v_2,v_3)$
is always a $2$-cell. But a pseudoline crossing $(v_1,v_2,v_3)$ would have to
go through two of the three vertices which is impossible.
\end{proof}

In other words, we just need to test whether certain graphs are isomorphic.
Such a test is implemented in most computer algebra systems that
include combinatorics and is good enough for our purpose.

\section{Stretchable simplicial arrangements}

We now assume that we have a complete list of simplicial wirings of
$n$ lines. A very valuable necessary condition for stretchability is Pappus' Theorem:
\begin{theor}[Pappus]
Let $x,y,z,u,v,w\in\RR^3$ with
\[ \dim\langle x,y,z\rangle = 2 = \dim\langle u,v,w\rangle. \]
Then
\[\dim(
(\langle x,v\rangle\cap\langle y,u\rangle) +
(\langle x,w\rangle\cap\langle z,u\rangle) +
(\langle z,v\rangle\cap\langle y,w\rangle)
) = 2.\]
\end{theor}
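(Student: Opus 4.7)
The plan is to translate the linear-algebraic statement into the classical projective formulation and verify it by direct computation. The hypothesis says that $x,y,z$ span a $2$-dimensional subspace $L_1\subseteq\RR^3$ and $u,v,w$ span a $2$-dimensional subspace $L_2\subseteq\RR^3$; under the usual dictionary, $\RR x,\ldots,\RR w$ are six points of $\PP^2$, three on the projective line $L_1$ and three on the projective line $L_2$. Each of the three summands inside the outer $\dim(\cdot)$ is the intersection of two planes through $0$, generically a line through $0$, i.e.\ a point of $\PP^2$. The conclusion is precisely that these three points are collinear, which is Pappus' theorem in its classical projective form.

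First I would dispose of the degenerate configurations: if $L_1=L_2$ all three intersection points already lie on this common line; if two of the six points coincide, or if any of the defining pairs $(x,v),(y,u),\ldots$ becomes linearly dependent, the corresponding intersections collapse and the bound on the dimension of the sum is immediate. In the generic situation I use that $\operatorname{PGL}(3,\RR)$ acts sharply transitively on ordered $4$-tuples in general position to normalise
\[ u=(1,0,0),\quad v=(0,1,0),\quad x=(0,0,1),\quad y=(1,1,1). \]
This makes $L_2=\{z_3=0\}$ and $L_1=\{z_1=z_2\}$, and the remaining freedom is captured by two parameters $r,s\in\RR$ via $z=(1,1,s)$ and $w=(r,1,0)$.

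With these coordinates each of the three intersections is a pair of cross products in $\RR^3$: one finds
\[ \langle x,v\rangle\cap\langle y,u\rangle=\RR(0,1,1),\quad \langle x,w\rangle\cap\langle z,u\rangle=\RR(r,1,s), \]
and $\langle z,v\rangle\cap\langle y,w\rangle=\RR(r,1-s+rs,rs)$. Collinearity now amounts to the vanishing of the $3\times 3$ determinant built from these three row vectors; expanding along the first row one obtains $-rs(r-1)+rs(r-1)=0$, which is the identity we need. The only real obstacle is not conceptual but notational: one must keep the sign conventions in the cross products straight, and the degenerate subcases (such as $r,s\in\{0,1\}$, where two of the planes $\langle\cdot,\cdot\rangle$ coincide) must each be checked separately, but in every one of them two of the three intersection points either agree or lie on a visibly common line, so the dimension bound is preserved.
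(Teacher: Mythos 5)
The paper does not prove this statement at all: it is quoted as the classical theorem of Pappus and used only as a necessary condition for stretchability, so there is no in-paper argument to compare yours against. Your proof is a correct and standard coordinate verification of the projective form of Pappus. I checked the normalization and the cross products: with $u=(1,0,0)$, $v=(0,1,0)$, $x=(0,0,1)$, $y=(1,1,1)$, $z=(1,1,s)$, $w=(r,1,0)$ the three intersection lines are indeed spanned by $(0,1,1)$, $(r,1,s)$ and $(r,1-s+rs,rs)$, and the determinant vanishes identically, which gives $\dim(\cdot)\le 2$; for generic $r,s$ the first two vectors are independent, so the dimension is exactly $2$. Two small points deserve attention. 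First, collinearity only bounds the dimension above, so you should say explicitly why two of the three lines are distinct in the generic case (they are, unless $r=0$ and $s=1$, which forces $w=v$ and $z=y$). Second, your closing remark that in every degenerate subcase ``the dimension bound is preserved'' is accurate only for the inequality $\le 2$: the equality asserted in the statement can actually fail under the stated hypotheses (for instance $x=v$, $y=u$, $L_1\ne L_2$, with $z,w$ generic makes the sum one-dimensional). That defect lies in the formulation quoted by the paper rather than in your computation, and it is harmless for the paper's application, where all six connecting lines are distinct members of the arrangement; but a careful write-up should either add a nondegeneracy hypothesis or weaken the conclusion to $\dim(\cdot)\le 2$, which is all the stretchability test uses.
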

We use this theorem in the following way for a wiring $\omega$
(compare \cite[Thm.\ 3.1]{b-G-72}):
Assume that $x,y,z$ are distinct vertices
on one line and $u,v,w$ are distinct vertices on another line.
If there are lines
$\langle x,v\rangle$, $\langle y,u\rangle$, $\langle x,w\rangle$,
$\langle z,u\rangle$, $\langle z,v\rangle$, $\langle y,w\rangle$
in $\omega$, and exactly two of the intersection points
\[ \langle x,v\rangle\cap\langle y,u\rangle,\quad
\langle x,w\rangle\cap\langle z,u\rangle,\quad
\langle z,v\rangle\cap\langle y,w\rangle \]
lie on a line of $\omega$, then stretchability of $\omega$ would
contradict Pappus' Theorem.

This is a very expensive test in terms of running time when implemented.
Therefore it is not possible to include it into the above algorithm.
However, it turns out that almost all unstretchable simplicial wirings
of up to $27$ lines do not satisfy Pappus' Theorem. Thus it appears to
be the best (known) tool to rule out wirings a posteriori.

For the very few remaining simplicial wirings we use \cite[Alg.\ 4.4]{p-C10b}:
We compute a Gr\"obner basis of the ideal given by the incidence
constraints. This either yields a realization of the wiring as arrangement
of straight lines, or it proves that the wiring is unstretchable.

\section{Results}

\begin{figure}
\begin{tiny}
\begin{tabular}{|l|rrrrrrrrrrrrr|}
\hline
number of lines & 2 & 3 & 4 & 5 & 6 & 7 & 8 & 9 & 10 & 11 & 12 & 13 & 14 \\
\hline
stretchable & 0 & 0 & 0 & 0 & 1 & 1 & 1 & 1 & 3 & 1 & 3 & 4 & 4 \\
unstretchable & 0 & 0 & 0 & 0 & 0 & 0 & 0 & 0 & 0 & 0 & 0 & 0 & 0 \\
\hline
number of lines & 15 & 16 & 17 & 18 & 19 & 20 & 21 & 22 & 23 & 24 & 25 & 26 & 27\\
\hline
stretchable & 5 & 7 & 8 & 8 & 7 & 5 & 7 & 5 & 2 & 4 & 8 & 4 & 4 \\
unstr.\ by Pappus & 2 & 6 & 7 & 28 & 35 & 136 & 168 & 978 & 1276 & 12720 & ? & ? & ? \\
unstr.\ but Pappus & 0 & 1 & 1 & 1 & 1 & 0 & 0 & 1 & 0 & 0 & 0 & 1 & 0 \\
\hline
\end{tabular}
\end{tiny}
\caption{Numbers of isomorphism classes of simplicial arrangements
(without the near-pencil arrangements)\label{pseudoiso}}
\end{figure}

\begin{figure}
\vspace{20pt}
\begin{center}
\setlength{\unitlength}{0.8pt}
\begin{picture}(440,90)(40,280)
\moveto(58,305)\lineto(58,305)\lineto(66,325)\lineto(66,325)\lineto(74,335)\lineto(74,335)\lineto(82,340)\lineto(82,340)\lineto(90,350)\lineto(90,350)\lineto(98,355)\lineto(98,355)\lineto(106,375)\lineto(106,375)\lineto(114,380)\lineto(362,380)
\moveto(58,310)\lineto(58,310)\lineto(66,320)\lineto(138,320)\lineto(146,330)\lineto(146,330)\lineto(154,340)\lineto(154,340)\lineto(162,345)\lineto(162,345)\lineto(170,355)\lineto(170,355)\lineto(178,365)\lineto(178,365)\lineto(186,375)\lineto(362,375)
\moveto(58,315)\lineto(58,315)\lineto(66,315)\lineto(186,315)\lineto(194,320)\lineto(194,320)\lineto(202,330)\lineto(202,330)\lineto(210,345)\lineto(210,345)\lineto(218,355)\lineto(218,355)\lineto(226,365)\lineto(226,365)\lineto(234,370)\lineto(362,370)
\moveto(58,320)\lineto(58,320)\lineto(66,310)\lineto(234,310)\lineto(242,320)\lineto(242,320)\lineto(250,330)\lineto(250,330)\lineto(258,335)\lineto(258,335)\lineto(266,345)\lineto(266,345)\lineto(274,355)\lineto(274,355)\lineto(282,365)\lineto(362,365)
\moveto(58,325)\lineto(58,325)\lineto(66,305)\lineto(306,305)\lineto(314,315)\lineto(314,315)\lineto(322,320)\lineto(322,320)\lineto(330,330)\lineto(330,330)\lineto(338,335)\lineto(338,335)\lineto(346,355)\lineto(346,355)\lineto(354,360)\lineto(362,360)
\moveto(58,330)\lineto(66,330)\lineto(74,330)\lineto(114,330)\lineto(122,340)\lineto(122,340)\lineto(130,355)\lineto(130,355)\lineto(138,360)\lineto(170,360)\lineto(178,360)\lineto(218,360)\lineto(226,360)\lineto(274,360)\lineto(282,360)\lineto(346,360)\lineto(354,355)\lineto(362,355)
\moveto(58,335)\lineto(66,335)\lineto(74,325)\lineto(138,325)\lineto(146,325)\lineto(194,325)\lineto(202,325)\lineto(242,325)\lineto(250,325)\lineto(290,325)\lineto(298,340)\lineto(338,340)\lineto(346,350)\lineto(362,350)
\moveto(58,340)\lineto(74,340)\lineto(82,335)\lineto(114,335)\lineto(122,335)\lineto(146,335)\lineto(154,335)\lineto(202,335)\lineto(210,340)\lineto(258,340)\lineto(266,340)\lineto(282,340)\lineto(290,345)\lineto(338,345)\lineto(346,345)\lineto(362,345)
\moveto(58,345)\lineto(82,345)\lineto(90,345)\lineto(122,345)\lineto(130,350)\lineto(162,350)\lineto(170,350)\lineto(210,350)\lineto(218,350)\lineto(266,350)\lineto(274,350)\lineto(338,350)\lineto(346,340)\lineto(362,340)
\moveto(58,350)\lineto(82,350)\lineto(90,340)\lineto(114,340)\lineto(122,330)\lineto(138,330)\lineto(146,320)\lineto(186,320)\lineto(194,315)\lineto(234,315)\lineto(242,315)\lineto(298,315)\lineto(306,325)\lineto(322,325)\lineto(330,325)\lineto(354,325)\lineto(362,335)\lineto(362,335)
\moveto(58,355)\lineto(90,355)\lineto(98,350)\lineto(122,350)\lineto(130,345)\lineto(154,345)\lineto(162,340)\lineto(202,340)\lineto(210,335)\lineto(250,335)\lineto(258,330)\lineto(290,330)\lineto(298,335)\lineto(330,335)\lineto(338,330)\lineto(354,330)\lineto(362,330)\lineto(362,330)
\moveto(58,360)\lineto(98,360)\lineto(106,370)\lineto(178,370)\lineto(186,370)\lineto(226,370)\lineto(234,365)\lineto(274,365)\lineto(282,355)\lineto(338,355)\lineto(346,335)\lineto(354,335)\lineto(362,325)\lineto(362,325)
\moveto(58,365)\lineto(98,365)\lineto(106,365)\lineto(170,365)\lineto(178,355)\lineto(210,355)\lineto(218,345)\lineto(258,345)\lineto(266,335)\lineto(290,335)\lineto(298,330)\lineto(322,330)\lineto(330,320)\lineto(362,320)
\moveto(58,370)\lineto(98,370)\lineto(106,360)\lineto(130,360)\lineto(138,355)\lineto(162,355)\lineto(170,345)\lineto(202,345)\lineto(210,330)\lineto(242,330)\lineto(250,320)\lineto(298,320)\lineto(306,320)\lineto(314,320)\lineto(322,315)\lineto(362,315)
\moveto(58,375)\lineto(98,375)\lineto(106,355)\lineto(122,355)\lineto(130,340)\lineto(146,340)\lineto(154,330)\lineto(194,330)\lineto(202,320)\lineto(234,320)\lineto(242,310)\lineto(306,310)\lineto(314,310)\lineto(362,310)
\moveto(58,380)\lineto(106,380)\lineto(114,375)\lineto(178,375)\lineto(186,365)\lineto(218,365)\lineto(226,355)\lineto(266,355)\lineto(274,345)\lineto(282,345)\lineto(290,340)\lineto(290,340)\lineto(298,325)\lineto(298,325)\lineto(306,315)\lineto(306,315)\lineto(314,305)\lineto(362,305)
\strokepath
\end{picture}
\setlength{\unitlength}{0.8pt}
\begin{picture}(440,90)(40,280)
\moveto(58,305)\lineto(58,305)\lineto(66,335)\lineto(66,335)\lineto(74,340)\lineto(74,340)\lineto(82,350)\lineto(82,350)\lineto(90,355)\lineto(90,355)\lineto(98,365)\lineto(98,365)\lineto(106,370)\lineto(106,370)\lineto(114,380)\lineto(114,380)\lineto(122,385)\lineto(394,385)
\moveto(58,310)\lineto(58,310)\lineto(66,330)\lineto(122,330)\lineto(130,340)\lineto(130,340)\lineto(138,355)\lineto(138,355)\lineto(146,370)\lineto(146,370)\lineto(154,380)\lineto(394,380)
\moveto(58,315)\lineto(58,315)\lineto(66,325)\lineto(162,325)\lineto(170,330)\lineto(170,330)\lineto(178,340)\lineto(178,340)\lineto(186,350)\lineto(186,350)\lineto(194,360)\lineto(194,360)\lineto(202,370)\lineto(202,370)\lineto(210,375)\lineto(394,375)
\moveto(58,320)\lineto(58,320)\lineto(66,320)\lineto(210,320)\lineto(218,330)\lineto(218,330)\lineto(226,340)\lineto(226,340)\lineto(234,350)\lineto(234,350)\lineto(242,360)\lineto(242,360)\lineto(250,370)\lineto(394,370)
\moveto(58,325)\lineto(58,325)\lineto(66,315)\lineto(250,315)\lineto(258,320)\lineto(258,320)\lineto(266,330)\lineto(266,330)\lineto(274,340)\lineto(274,340)\lineto(282,350)\lineto(282,350)\lineto(290,360)\lineto(290,360)\lineto(298,365)\lineto(394,365)
\moveto(58,330)\lineto(58,330)\lineto(66,310)\lineto(306,310)\lineto(314,320)\lineto(314,320)\lineto(322,335)\lineto(322,335)\lineto(330,350)\lineto(330,350)\lineto(338,360)\lineto(394,360)
\moveto(58,335)\lineto(58,335)\lineto(66,305)\lineto(362,305)\lineto(370,315)\lineto(370,315)\lineto(378,330)\lineto(378,330)\lineto(386,345)\lineto(386,345)\lineto(394,355)\lineto(394,355)
\moveto(58,340)\lineto(66,340)\lineto(74,335)\lineto(122,335)\lineto(130,335)\lineto(170,335)\lineto(178,335)\lineto(218,335)\lineto(226,335)\lineto(266,335)\lineto(274,335)\lineto(298,335)\lineto(306,340)\lineto(322,340)\lineto(330,345)\lineto(338,345)\lineto(346,350)\lineto(386,350)\lineto(394,350)\lineto(394,350)
\moveto(58,345)\lineto(74,345)\lineto(82,345)\lineto(130,345)\lineto(138,350)\lineto(154,350)\lineto(162,355)\lineto(186,355)\lineto(194,355)\lineto(234,355)\lineto(242,355)\lineto(282,355)\lineto(290,355)\lineto(330,355)\lineto(338,355)\lineto(386,355)\lineto(394,345)\lineto(394,345)
\moveto(58,350)\lineto(74,350)\lineto(82,340)\lineto(122,340)\lineto(130,330)\lineto(162,330)\lineto(170,325)\lineto(210,325)\lineto(218,325)\lineto(258,325)\lineto(266,325)\lineto(314,325)\lineto(322,330)\lineto(346,330)\lineto(354,335)\lineto(378,335)\lineto(386,340)\lineto(394,340)
\moveto(58,355)\lineto(82,355)\lineto(90,350)\lineto(130,350)\lineto(138,345)\lineto(178,345)\lineto(186,345)\lineto(226,345)\lineto(234,345)\lineto(274,345)\lineto(282,345)\lineto(322,345)\lineto(330,340)\lineto(378,340)\lineto(386,335)\lineto(394,335)
\moveto(58,360)\lineto(90,360)\lineto(98,360)\lineto(138,360)\lineto(146,365)\lineto(194,365)\lineto(202,365)\lineto(242,365)\lineto(250,365)\lineto(290,365)\lineto(298,360)\lineto(330,360)\lineto(338,350)\lineto(338,350)\lineto(346,345)\lineto(378,345)\lineto(386,330)\lineto(394,330)
\moveto(58,365)\lineto(90,365)\lineto(98,355)\lineto(130,355)\lineto(138,340)\lineto(170,340)\lineto(178,330)\lineto(210,330)\lineto(218,320)\lineto(250,320)\lineto(258,315)\lineto(306,315)\lineto(314,315)\lineto(354,315)\lineto(362,320)\lineto(370,320)\lineto(378,325)\lineto(394,325)
\moveto(58,370)\lineto(98,370)\lineto(106,365)\lineto(138,365)\lineto(146,360)\lineto(186,360)\lineto(194,350)\lineto(226,350)\lineto(234,340)\lineto(266,340)\lineto(274,330)\lineto(314,330)\lineto(322,325)\lineto(370,325)\lineto(378,320)\lineto(394,320)
\moveto(58,375)\lineto(106,375)\lineto(114,375)\lineto(146,375)\lineto(154,375)\lineto(202,375)\lineto(210,370)\lineto(242,370)\lineto(250,360)\lineto(282,360)\lineto(290,350)\lineto(322,350)\lineto(330,335)\lineto(346,335)\lineto(354,330)\lineto(370,330)\lineto(378,315)\lineto(394,315)
\moveto(58,380)\lineto(106,380)\lineto(114,370)\lineto(138,370)\lineto(146,355)\lineto(154,355)\lineto(162,350)\lineto(178,350)\lineto(186,340)\lineto(218,340)\lineto(226,330)\lineto(258,330)\lineto(266,320)\lineto(306,320)\lineto(314,310)\lineto(362,310)\lineto(370,310)\lineto(394,310)
\moveto(58,385)\lineto(114,385)\lineto(122,380)\lineto(146,380)\lineto(154,370)\lineto(194,370)\lineto(202,360)\lineto(234,360)\lineto(242,350)\lineto(274,350)\lineto(282,340)\lineto(298,340)\lineto(306,335)\lineto(314,335)\lineto(322,320)\lineto(354,320)\lineto(362,315)\lineto(362,315)\lineto(370,305)\lineto(394,305)
\strokepath
\end{picture}
\setlength{\unitlength}{0.8pt}
\begin{picture}(440,90)(40,285)
\moveto(58,305)\lineto(58,305)\lineto(66,340)\lineto(66,340)\lineto(74,345)\lineto(74,345)\lineto(82,355)\lineto(82,355)\lineto(90,360)\lineto(90,360)\lineto(98,370)\lineto(98,370)\lineto(106,375)\lineto(106,375)\lineto(114,385)\lineto(114,385)\lineto(122,390)\lineto(426,390)
\moveto(58,310)\lineto(58,310)\lineto(66,335)\lineto(122,335)\lineto(130,345)\lineto(130,345)\lineto(138,360)\lineto(138,360)\lineto(146,375)\lineto(146,375)\lineto(154,385)\lineto(426,385)
\moveto(58,315)\lineto(58,315)\lineto(66,330)\lineto(154,330)\lineto(162,335)\lineto(162,335)\lineto(170,345)\lineto(170,345)\lineto(178,350)\lineto(178,350)\lineto(186,360)\lineto(186,360)\lineto(194,365)\lineto(194,365)\lineto(202,375)\lineto(202,375)\lineto(210,380)\lineto(426,380)
\moveto(58,320)\lineto(58,320)\lineto(66,325)\lineto(210,325)\lineto(218,335)\lineto(218,335)\lineto(226,350)\lineto(226,350)\lineto(234,365)\lineto(234,365)\lineto(242,375)\lineto(426,375)
\moveto(58,325)\lineto(58,325)\lineto(66,320)\lineto(250,320)\lineto(258,325)\lineto(258,325)\lineto(266,335)\lineto(266,335)\lineto(274,345)\lineto(274,345)\lineto(282,355)\lineto(282,355)\lineto(290,365)\lineto(290,365)\lineto(298,370)\lineto(426,370)
\moveto(58,330)\lineto(58,330)\lineto(66,315)\lineto(298,315)\lineto(306,325)\lineto(306,325)\lineto(314,335)\lineto(314,335)\lineto(322,345)\lineto(322,345)\lineto(330,355)\lineto(330,355)\lineto(338,365)\lineto(426,365)
\moveto(58,335)\lineto(58,335)\lineto(66,310)\lineto(338,310)\lineto(346,315)\lineto(346,315)\lineto(354,325)\lineto(354,325)\lineto(362,335)\lineto(362,335)\lineto(370,345)\lineto(370,345)\lineto(378,355)\lineto(378,355)\lineto(386,360)\lineto(426,360)
\moveto(58,340)\lineto(58,340)\lineto(66,305)\lineto(394,305)\lineto(402,315)\lineto(402,315)\lineto(410,330)\lineto(410,330)\lineto(418,345)\lineto(418,345)\lineto(426,355)\lineto(426,355)
\moveto(58,345)\lineto(66,345)\lineto(74,340)\lineto(122,340)\lineto(130,340)\lineto(162,340)\lineto(170,340)\lineto(218,340)\lineto(226,345)\lineto(242,345)\lineto(250,350)\lineto(274,350)\lineto(282,350)\lineto(322,350)\lineto(330,350)\lineto(370,350)\lineto(378,350)\lineto(418,350)\lineto(426,350)\lineto(426,350)
\moveto(58,350)\lineto(74,350)\lineto(82,350)\lineto(130,350)\lineto(138,355)\lineto(178,355)\lineto(186,355)\lineto(226,355)\lineto(234,360)\lineto(282,360)\lineto(290,360)\lineto(330,360)\lineto(338,360)\lineto(378,360)\lineto(386,355)\lineto(418,355)\lineto(426,345)\lineto(426,345)
\moveto(58,355)\lineto(74,355)\lineto(82,345)\lineto(122,345)\lineto(130,335)\lineto(154,335)\lineto(162,330)\lineto(210,330)\lineto(218,330)\lineto(258,330)\lineto(266,330)\lineto(306,330)\lineto(314,330)\lineto(354,330)\lineto(362,330)\lineto(386,330)\lineto(394,335)\lineto(410,335)\lineto(418,340)\lineto(426,340)
\moveto(58,360)\lineto(82,360)\lineto(90,355)\lineto(130,355)\lineto(138,350)\lineto(170,350)\lineto(178,345)\lineto(218,345)\lineto(226,340)\lineto(266,340)\lineto(274,340)\lineto(314,340)\lineto(322,340)\lineto(362,340)\lineto(370,340)\lineto(410,340)\lineto(418,335)\lineto(426,335)
\moveto(58,365)\lineto(90,365)\lineto(98,365)\lineto(138,365)\lineto(146,370)\lineto(194,370)\lineto(202,370)\lineto(234,370)\lineto(242,370)\lineto(290,370)\lineto(298,365)\lineto(330,365)\lineto(338,355)\lineto(370,355)\lineto(378,345)\lineto(410,345)\lineto(418,330)\lineto(426,330)
\moveto(58,370)\lineto(90,370)\lineto(98,360)\lineto(130,360)\lineto(138,345)\lineto(162,345)\lineto(170,335)\lineto(210,335)\lineto(218,325)\lineto(250,325)\lineto(258,320)\lineto(298,320)\lineto(306,320)\lineto(346,320)\lineto(354,320)\lineto(402,320)\lineto(410,325)\lineto(426,325)
\moveto(58,375)\lineto(98,375)\lineto(106,370)\lineto(138,370)\lineto(146,365)\lineto(186,365)\lineto(194,360)\lineto(226,360)\lineto(234,355)\lineto(274,355)\lineto(282,345)\lineto(314,345)\lineto(322,335)\lineto(354,335)\lineto(362,325)\lineto(402,325)\lineto(410,320)\lineto(426,320)
\moveto(58,380)\lineto(106,380)\lineto(114,380)\lineto(146,380)\lineto(154,380)\lineto(202,380)\lineto(210,375)\lineto(234,375)\lineto(242,365)\lineto(282,365)\lineto(290,355)\lineto(322,355)\lineto(330,345)\lineto(362,345)\lineto(370,335)\lineto(386,335)\lineto(394,330)\lineto(402,330)\lineto(410,315)\lineto(426,315)
\moveto(58,385)\lineto(106,385)\lineto(114,375)\lineto(138,375)\lineto(146,360)\lineto(178,360)\lineto(186,350)\lineto(218,350)\lineto(226,335)\lineto(258,335)\lineto(266,325)\lineto(298,325)\lineto(306,315)\lineto(338,315)\lineto(346,310)\lineto(394,310)\lineto(402,310)\lineto(426,310)
\moveto(58,390)\lineto(114,390)\lineto(122,385)\lineto(146,385)\lineto(154,375)\lineto(194,375)\lineto(202,365)\lineto(226,365)\lineto(234,350)\lineto(242,350)\lineto(250,345)\lineto(266,345)\lineto(274,335)\lineto(306,335)\lineto(314,325)\lineto(346,325)\lineto(354,315)\lineto(394,315)\lineto(402,305)\lineto(426,305)
\strokepath
\end{picture}
\end{center}
\begin{center}
\setlength{\unitlength}{0.6pt}
\begin{picture}(550,90)(50,290)
\moveto(58,305)\lineto(58,305)\lineto(66,320)\lineto(66,320)\lineto(74,330)\lineto(74,330)\lineto(82,340)\lineto(82,340)\lineto(90,345)\lineto(90,345)\lineto(98,355)\lineto(98,355)\lineto(106,365)\lineto(106,365)\lineto(114,375)\lineto(114,375)\lineto(122,380)\lineto(122,380)\lineto(130,390)\lineto(130,390)\lineto(138,395)\lineto(498,395)
\moveto(58,310)\lineto(58,310)\lineto(66,315)\lineto(138,315)\lineto(146,320)\lineto(146,320)\lineto(154,330)\lineto(154,330)\lineto(162,345)\lineto(162,345)\lineto(170,355)\lineto(170,355)\lineto(178,365)\lineto(178,365)\lineto(186,380)\lineto(186,380)\lineto(194,390)\lineto(498,390)
\moveto(58,315)\lineto(58,315)\lineto(66,310)\lineto(298,310)\lineto(306,325)\lineto(306,325)\lineto(314,340)\lineto(314,340)\lineto(322,355)\lineto(322,355)\lineto(330,370)\lineto(330,370)\lineto(338,385)\lineto(498,385)
\moveto(58,320)\lineto(58,320)\lineto(66,305)\lineto(442,305)\lineto(450,315)\lineto(450,315)\lineto(458,330)\lineto(458,330)\lineto(466,340)\lineto(466,340)\lineto(474,350)\lineto(474,350)\lineto(482,365)\lineto(482,365)\lineto(490,375)\lineto(490,375)\lineto(498,380)\lineto(498,380)
\moveto(58,325)\lineto(66,325)\lineto(74,325)\lineto(146,325)\lineto(154,325)\lineto(242,325)\lineto(250,335)\lineto(250,335)\lineto(258,340)\lineto(258,340)\lineto(266,350)\lineto(266,350)\lineto(274,360)\lineto(274,360)\lineto(282,370)\lineto(282,370)\lineto(290,375)\lineto(330,375)\lineto(338,380)\lineto(490,380)\lineto(498,375)\lineto(498,375)
\moveto(58,330)\lineto(66,330)\lineto(74,320)\lineto(138,320)\lineto(146,315)\lineto(298,315)\lineto(306,320)\lineto(346,320)\lineto(354,325)\lineto(354,325)\lineto(362,335)\lineto(362,335)\lineto(370,345)\lineto(370,345)\lineto(378,355)\lineto(378,355)\lineto(386,360)\lineto(386,360)\lineto(394,370)\lineto(482,370)\lineto(490,370)\lineto(498,370)
\moveto(58,335)\lineto(74,335)\lineto(82,335)\lineto(154,335)\lineto(162,340)\lineto(202,340)\lineto(210,345)\lineto(210,345)\lineto(218,360)\lineto(218,360)\lineto(226,370)\lineto(226,370)\lineto(234,380)\lineto(330,380)\lineto(338,375)\lineto(482,375)\lineto(490,365)\lineto(498,365)
\moveto(58,340)\lineto(74,340)\lineto(82,330)\lineto(146,330)\lineto(154,320)\lineto(298,320)\lineto(306,315)\lineto(402,315)\lineto(410,325)\lineto(410,325)\lineto(418,335)\lineto(418,335)\lineto(426,350)\lineto(426,350)\lineto(434,355)\lineto(474,355)\lineto(482,360)\lineto(498,360)
\moveto(58,345)\lineto(82,345)\lineto(90,340)\lineto(154,340)\lineto(162,335)\lineto(234,335)\lineto(242,345)\lineto(258,345)\lineto(266,345)\lineto(314,345)\lineto(322,350)\lineto(370,350)\lineto(378,350)\lineto(394,350)\lineto(402,360)\lineto(474,360)\lineto(482,355)\lineto(498,355)
\moveto(58,350)\lineto(90,350)\lineto(98,350)\lineto(162,350)\lineto(170,350)\lineto(210,350)\lineto(218,355)\lineto(266,355)\lineto(274,355)\lineto(290,355)\lineto(298,360)\lineto(322,360)\lineto(330,365)\lineto(386,365)\lineto(394,365)\lineto(474,365)\lineto(482,350)\lineto(498,350)
\moveto(58,355)\lineto(90,355)\lineto(98,345)\lineto(154,345)\lineto(162,330)\lineto(242,330)\lineto(250,330)\lineto(306,330)\lineto(314,335)\lineto(338,335)\lineto(346,340)\lineto(362,340)\lineto(370,340)\lineto(418,340)\lineto(426,345)\lineto(466,345)\lineto(474,345)\lineto(498,345)
\moveto(58,360)\lineto(98,360)\lineto(106,360)\lineto(170,360)\lineto(178,360)\lineto(194,360)\lineto(202,365)\lineto(218,365)\lineto(226,365)\lineto(274,365)\lineto(282,365)\lineto(322,365)\lineto(330,360)\lineto(378,360)\lineto(386,355)\lineto(394,355)\lineto(402,355)\lineto(426,355)\lineto(434,350)\lineto(466,350)\lineto(474,340)\lineto(498,340)
\moveto(58,365)\lineto(98,365)\lineto(106,355)\lineto(162,355)\lineto(170,345)\lineto(202,345)\lineto(210,340)\lineto(234,340)\lineto(242,340)\lineto(250,340)\lineto(258,335)\lineto(306,335)\lineto(314,330)\lineto(354,330)\lineto(362,330)\lineto(410,330)\lineto(418,330)\lineto(434,330)\lineto(442,335)\lineto(458,335)\lineto(466,335)\lineto(498,335)
\moveto(58,370)\lineto(106,370)\lineto(114,370)\lineto(178,370)\lineto(186,375)\lineto(226,375)\lineto(234,375)\lineto(282,375)\lineto(290,370)\lineto(322,370)\lineto(330,355)\lineto(370,355)\lineto(378,345)\lineto(418,345)\lineto(426,340)\lineto(458,340)\lineto(466,330)\lineto(498,330)
\moveto(58,375)\lineto(106,375)\lineto(114,365)\lineto(170,365)\lineto(178,355)\lineto(210,355)\lineto(218,350)\lineto(258,350)\lineto(266,340)\lineto(306,340)\lineto(314,325)\lineto(346,325)\lineto(354,320)\lineto(402,320)\lineto(410,320)\lineto(450,320)\lineto(458,325)\lineto(498,325)
\moveto(58,380)\lineto(114,380)\lineto(122,375)\lineto(178,375)\lineto(186,370)\lineto(218,370)\lineto(226,360)\lineto(266,360)\lineto(274,350)\lineto(314,350)\lineto(322,345)\lineto(362,345)\lineto(370,335)\lineto(410,335)\lineto(418,325)\lineto(450,325)\lineto(458,320)\lineto(498,320)
\moveto(58,385)\lineto(122,385)\lineto(130,385)\lineto(186,385)\lineto(194,385)\lineto(330,385)\lineto(338,370)\lineto(386,370)\lineto(394,360)\lineto(394,360)\lineto(402,350)\lineto(418,350)\lineto(426,335)\lineto(434,335)\lineto(442,330)\lineto(450,330)\lineto(458,315)\lineto(498,315)
\moveto(58,390)\lineto(122,390)\lineto(130,380)\lineto(178,380)\lineto(186,365)\lineto(194,365)\lineto(202,360)\lineto(210,360)\lineto(218,345)\lineto(234,345)\lineto(242,335)\lineto(242,335)\lineto(250,325)\lineto(298,325)\lineto(306,310)\lineto(442,310)\lineto(450,310)\lineto(498,310)
\moveto(58,395)\lineto(130,395)\lineto(138,390)\lineto(186,390)\lineto(194,380)\lineto(226,380)\lineto(234,370)\lineto(274,370)\lineto(282,360)\lineto(290,360)\lineto(298,355)\lineto(314,355)\lineto(322,340)\lineto(338,340)\lineto(346,335)\lineto(354,335)\lineto(362,325)\lineto(402,325)\lineto(410,315)\lineto(442,315)\lineto(450,305)\lineto(498,305)
\strokepath
\end{picture}
\setlength{\unitlength}{0.5pt}
\begin{picture}(660,90)(50,315)
\moveto(58,305)\lineto(58,305)\lineto(66,350)\lineto(66,350)\lineto(74,355)\lineto(74,355)\lineto(82,365)\lineto(82,365)\lineto(90,375)\lineto(90,375)\lineto(98,385)\lineto(98,385)\lineto(106,395)\lineto(106,395)\lineto(114,405)\lineto(114,405)\lineto(122,410)\lineto(594,410)
\moveto(58,310)\lineto(58,310)\lineto(66,345)\lineto(122,345)\lineto(130,355)\lineto(130,355)\lineto(138,365)\lineto(138,365)\lineto(146,375)\lineto(146,375)\lineto(154,385)\lineto(154,385)\lineto(162,395)\lineto(162,395)\lineto(170,405)\lineto(594,405)
\moveto(58,315)\lineto(58,315)\lineto(66,340)\lineto(178,340)\lineto(186,345)\lineto(186,345)\lineto(194,355)\lineto(194,355)\lineto(202,370)\lineto(202,370)\lineto(210,385)\lineto(210,385)\lineto(218,395)\lineto(218,395)\lineto(226,400)\lineto(594,400)
\moveto(58,320)\lineto(58,320)\lineto(66,335)\lineto(226,335)\lineto(234,345)\lineto(234,345)\lineto(242,355)\lineto(242,355)\lineto(250,360)\lineto(250,360)\lineto(258,370)\lineto(258,370)\lineto(266,375)\lineto(266,375)\lineto(274,385)\lineto(274,385)\lineto(282,395)\lineto(594,395)
\moveto(58,325)\lineto(58,325)\lineto(66,330)\lineto(282,330)\lineto(290,335)\lineto(290,335)\lineto(298,345)\lineto(298,345)\lineto(306,360)\lineto(306,360)\lineto(314,375)\lineto(314,375)\lineto(322,385)\lineto(322,385)\lineto(330,390)\lineto(594,390)
\moveto(58,330)\lineto(58,330)\lineto(66,325)\lineto(330,325)\lineto(338,335)\lineto(338,335)\lineto(346,345)\lineto(346,345)\lineto(354,350)\lineto(354,350)\lineto(362,360)\lineto(362,360)\lineto(370,365)\lineto(370,365)\lineto(378,375)\lineto(378,375)\lineto(386,385)\lineto(594,385)
\moveto(58,335)\lineto(58,335)\lineto(66,320)\lineto(386,320)\lineto(394,325)\lineto(394,325)\lineto(402,335)\lineto(402,335)\lineto(410,350)\lineto(410,350)\lineto(418,365)\lineto(418,365)\lineto(426,375)\lineto(426,375)\lineto(434,380)\lineto(594,380)
\moveto(58,340)\lineto(58,340)\lineto(66,315)\lineto(434,315)\lineto(442,325)\lineto(442,325)\lineto(450,335)\lineto(450,335)\lineto(458,340)\lineto(458,340)\lineto(466,350)\lineto(466,350)\lineto(474,355)\lineto(474,355)\lineto(482,365)\lineto(482,365)\lineto(490,375)\lineto(594,375)
\moveto(58,345)\lineto(58,345)\lineto(66,310)\lineto(490,310)\lineto(498,315)\lineto(498,315)\lineto(506,325)\lineto(506,325)\lineto(514,340)\lineto(514,340)\lineto(522,355)\lineto(522,355)\lineto(530,365)\lineto(530,365)\lineto(538,370)\lineto(594,370)
\moveto(58,350)\lineto(58,350)\lineto(66,305)\lineto(546,305)\lineto(554,315)\lineto(554,315)\lineto(562,325)\lineto(562,325)\lineto(570,335)\lineto(570,335)\lineto(578,345)\lineto(578,345)\lineto(586,355)\lineto(586,355)\lineto(594,365)\lineto(594,365)
\moveto(58,355)\lineto(66,355)\lineto(74,350)\lineto(122,350)\lineto(130,350)\lineto(186,350)\lineto(194,350)\lineto(234,350)\lineto(242,350)\lineto(298,350)\lineto(306,355)\lineto(354,355)\lineto(362,355)\lineto(410,355)\lineto(418,360)\lineto(474,360)\lineto(482,360)\lineto(522,360)\lineto(530,360)\lineto(586,360)\lineto(594,360)\lineto(594,360)
\moveto(58,360)\lineto(74,360)\lineto(82,360)\lineto(130,360)\lineto(138,360)\lineto(194,360)\lineto(202,365)\lineto(250,365)\lineto(258,365)\lineto(306,365)\lineto(314,370)\lineto(370,370)\lineto(378,370)\lineto(418,370)\lineto(426,370)\lineto(482,370)\lineto(490,370)\lineto(530,370)\lineto(538,365)\lineto(586,365)\lineto(594,355)\lineto(594,355)
\moveto(58,365)\lineto(74,365)\lineto(82,355)\lineto(122,355)\lineto(130,345)\lineto(178,345)\lineto(186,340)\lineto(226,340)\lineto(234,340)\lineto(290,340)\lineto(298,340)\lineto(338,340)\lineto(346,340)\lineto(402,340)\lineto(410,345)\lineto(458,345)\lineto(466,345)\lineto(514,345)\lineto(522,350)\lineto(578,350)\lineto(586,350)\lineto(594,350)
\moveto(58,370)\lineto(82,370)\lineto(90,370)\lineto(138,370)\lineto(146,370)\lineto(170,370)\lineto(178,375)\lineto(202,375)\lineto(210,380)\lineto(266,380)\lineto(274,380)\lineto(314,380)\lineto(322,380)\lineto(378,380)\lineto(386,380)\lineto(426,380)\lineto(434,375)\lineto(482,375)\lineto(490,365)\lineto(522,365)\lineto(530,355)\lineto(578,355)\lineto(586,345)\lineto(594,345)
\moveto(58,375)\lineto(82,375)\lineto(90,365)\lineto(130,365)\lineto(138,355)\lineto(186,355)\lineto(194,345)\lineto(226,345)\lineto(234,335)\lineto(282,335)\lineto(290,330)\lineto(330,330)\lineto(338,330)\lineto(394,330)\lineto(402,330)\lineto(442,330)\lineto(450,330)\lineto(506,330)\lineto(514,335)\lineto(538,335)\lineto(546,340)\lineto(570,340)\lineto(578,340)\lineto(594,340)
\moveto(58,380)\lineto(90,380)\lineto(98,380)\lineto(146,380)\lineto(154,380)\lineto(202,380)\lineto(210,375)\lineto(258,375)\lineto(266,370)\lineto(306,370)\lineto(314,365)\lineto(362,365)\lineto(370,360)\lineto(410,360)\lineto(418,355)\lineto(466,355)\lineto(474,350)\lineto(514,350)\lineto(522,345)\lineto(570,345)\lineto(578,335)\lineto(594,335)
\moveto(58,385)\lineto(90,385)\lineto(98,375)\lineto(138,375)\lineto(146,365)\lineto(194,365)\lineto(202,360)\lineto(242,360)\lineto(250,355)\lineto(298,355)\lineto(306,350)\lineto(346,350)\lineto(354,345)\lineto(402,345)\lineto(410,340)\lineto(450,340)\lineto(458,335)\lineto(506,335)\lineto(514,330)\lineto(562,330)\lineto(570,330)\lineto(594,330)
\moveto(58,390)\lineto(98,390)\lineto(106,390)\lineto(154,390)\lineto(162,390)\lineto(210,390)\lineto(218,390)\lineto(274,390)\lineto(282,390)\lineto(322,390)\lineto(330,385)\lineto(378,385)\lineto(386,375)\lineto(418,375)\lineto(426,365)\lineto(474,365)\lineto(482,355)\lineto(514,355)\lineto(522,340)\lineto(538,340)\lineto(546,335)\lineto(562,335)\lineto(570,325)\lineto(594,325)
\moveto(58,395)\lineto(98,395)\lineto(106,385)\lineto(146,385)\lineto(154,375)\lineto(170,375)\lineto(178,370)\lineto(194,370)\lineto(202,355)\lineto(234,355)\lineto(242,345)\lineto(290,345)\lineto(298,335)\lineto(330,335)\lineto(338,325)\lineto(386,325)\lineto(394,320)\lineto(434,320)\lineto(442,320)\lineto(498,320)\lineto(506,320)\lineto(554,320)\lineto(562,320)\lineto(594,320)
\moveto(58,400)\lineto(106,400)\lineto(114,400)\lineto(162,400)\lineto(170,400)\lineto(218,400)\lineto(226,395)\lineto(274,395)\lineto(282,385)\lineto(314,385)\lineto(322,375)\lineto(370,375)\lineto(378,365)\lineto(410,365)\lineto(418,350)\lineto(458,350)\lineto(466,340)\lineto(506,340)\lineto(514,325)\lineto(554,325)\lineto(562,315)\lineto(594,315)
\moveto(58,405)\lineto(106,405)\lineto(114,395)\lineto(154,395)\lineto(162,385)\lineto(202,385)\lineto(210,370)\lineto(250,370)\lineto(258,360)\lineto(298,360)\lineto(306,345)\lineto(338,345)\lineto(346,335)\lineto(394,335)\lineto(402,325)\lineto(434,325)\lineto(442,315)\lineto(490,315)\lineto(498,310)\lineto(546,310)\lineto(554,310)\lineto(594,310)
\moveto(58,410)\lineto(114,410)\lineto(122,405)\lineto(162,405)\lineto(170,395)\lineto(210,395)\lineto(218,385)\lineto(266,385)\lineto(274,375)\lineto(306,375)\lineto(314,360)\lineto(354,360)\lineto(362,350)\lineto(402,350)\lineto(410,335)\lineto(442,335)\lineto(450,325)\lineto(498,325)\lineto(506,315)\lineto(546,315)\lineto(554,305)\lineto(594,305)
\strokepath
\end{picture}
\setlength{\unitlength}{0.4pt}
\begin{picture}(760,140)(80,320)
\moveto(58,305)\lineto(58,305)\lineto(66,325)\lineto(66,325)\lineto(74,330)\lineto(74,330)\lineto(82,340)\lineto(82,340)\lineto(90,350)\lineto(90,350)\lineto(98,365)\lineto(98,365)\lineto(106,375)\lineto(106,375)\lineto(114,385)\lineto(114,385)\lineto(122,390)\lineto(122,390)\lineto(130,410)\lineto(130,410)\lineto(138,415)\lineto(138,415)\lineto(146,425)\lineto(146,425)\lineto(154,430)\lineto(866,430)
\moveto(58,310)\lineto(58,310)\lineto(66,320)\lineto(178,320)\lineto(186,330)\lineto(186,330)\lineto(194,340)\lineto(194,340)\lineto(202,350)\lineto(202,350)\lineto(210,360)\lineto(210,360)\lineto(218,365)\lineto(218,365)\lineto(226,375)\lineto(226,375)\lineto(234,385)\lineto(234,385)\lineto(242,395)\lineto(242,395)\lineto(250,400)\lineto(250,400)\lineto(258,415)\lineto(258,415)\lineto(266,425)\lineto(866,425)
\moveto(58,315)\lineto(58,315)\lineto(66,315)\lineto(434,315)\lineto(442,325)\lineto(442,325)\lineto(450,340)\lineto(450,340)\lineto(458,345)\lineto(458,345)\lineto(466,355)\lineto(466,355)\lineto(474,365)\lineto(474,365)\lineto(482,375)\lineto(482,375)\lineto(490,380)\lineto(490,380)\lineto(498,390)\lineto(498,390)\lineto(506,400)\lineto(506,400)\lineto(514,410)\lineto(514,410)\lineto(522,420)\lineto(866,420)
\moveto(58,320)\lineto(58,320)\lineto(66,310)\lineto(546,310)\lineto(554,315)\lineto(554,315)\lineto(562,325)\lineto(562,325)\lineto(570,330)\lineto(570,330)\lineto(578,350)\lineto(578,350)\lineto(586,355)\lineto(586,355)\lineto(594,365)\lineto(594,365)\lineto(602,375)\lineto(602,375)\lineto(610,390)\lineto(610,390)\lineto(618,400)\lineto(618,400)\lineto(626,410)\lineto(626,410)\lineto(634,415)\lineto(866,415)
\moveto(58,325)\lineto(58,325)\lineto(66,305)\lineto(714,305)\lineto(722,320)\lineto(722,320)\lineto(730,330)\lineto(730,330)\lineto(738,340)\lineto(738,340)\lineto(746,350)\lineto(746,350)\lineto(754,360)\lineto(754,360)\lineto(762,380)\lineto(762,380)\lineto(770,385)\lineto(770,385)\lineto(778,400)\lineto(778,400)\lineto(786,410)\lineto(866,410)
\moveto(58,330)\lineto(66,330)\lineto(74,325)\lineto(178,325)\lineto(186,325)\lineto(362,325)\lineto(370,335)\lineto(370,335)\lineto(378,345)\lineto(378,345)\lineto(386,350)\lineto(386,350)\lineto(394,365)\lineto(394,365)\lineto(402,375)\lineto(402,375)\lineto(410,395)\lineto(410,395)\lineto(418,405)\lineto(506,405)\lineto(514,405)\lineto(618,405)\lineto(626,405)\lineto(778,405)\lineto(786,405)\lineto(866,405)
\moveto(58,335)\lineto(74,335)\lineto(82,335)\lineto(186,335)\lineto(194,335)\lineto(282,335)\lineto(290,345)\lineto(290,345)\lineto(298,365)\lineto(298,365)\lineto(306,375)\lineto(306,375)\lineto(314,390)\lineto(314,390)\lineto(322,395)\lineto(322,395)\lineto(330,405)\lineto(330,405)\lineto(338,415)\lineto(514,415)\lineto(522,415)\lineto(626,415)\lineto(634,410)\lineto(778,410)\lineto(786,400)\lineto(866,400)
\moveto(58,340)\lineto(74,340)\lineto(82,330)\lineto(178,330)\lineto(186,320)\lineto(434,320)\lineto(442,320)\lineto(554,320)\lineto(562,320)\lineto(650,320)\lineto(658,330)\lineto(658,330)\lineto(666,340)\lineto(666,340)\lineto(674,350)\lineto(674,350)\lineto(682,365)\lineto(682,365)\lineto(690,375)\lineto(690,375)\lineto(698,380)\lineto(698,380)\lineto(706,390)\lineto(770,390)\lineto(778,395)\lineto(866,395)
\moveto(58,345)\lineto(82,345)\lineto(90,345)\lineto(194,345)\lineto(202,345)\lineto(274,345)\lineto(282,350)\lineto(290,350)\lineto(298,360)\lineto(338,360)\lineto(346,365)\lineto(346,365)\lineto(354,375)\lineto(354,375)\lineto(362,380)\lineto(402,380)\lineto(410,390)\lineto(418,390)\lineto(426,395)\lineto(498,395)\lineto(506,395)\lineto(610,395)\lineto(618,395)\lineto(770,395)\lineto(778,390)\lineto(866,390)
\moveto(58,350)\lineto(82,350)\lineto(90,340)\lineto(186,340)\lineto(194,330)\lineto(362,330)\lineto(370,330)\lineto(442,330)\lineto(450,335)\lineto(570,335)\lineto(578,345)\lineto(634,345)\lineto(642,355)\lineto(674,355)\lineto(682,360)\lineto(706,360)\lineto(714,365)\lineto(754,365)\lineto(762,375)\lineto(786,375)\lineto(794,385)\lineto(866,385)
\moveto(58,355)\lineto(90,355)\lineto(98,360)\lineto(170,360)\lineto(178,370)\lineto(218,370)\lineto(226,370)\lineto(298,370)\lineto(306,370)\lineto(346,370)\lineto(354,370)\lineto(394,370)\lineto(402,370)\lineto(474,370)\lineto(482,370)\lineto(522,370)\lineto(530,380)\lineto(602,380)\lineto(610,385)\lineto(698,385)\lineto(706,385)\lineto(762,385)\lineto(770,380)\lineto(786,380)\lineto(794,380)\lineto(866,380)
\moveto(58,360)\lineto(90,360)\lineto(98,355)\lineto(202,355)\lineto(210,355)\lineto(290,355)\lineto(298,355)\lineto(386,355)\lineto(394,360)\lineto(466,360)\lineto(474,360)\lineto(530,360)\lineto(538,370)\lineto(594,370)\lineto(602,370)\lineto(682,370)\lineto(690,370)\lineto(754,370)\lineto(762,370)\lineto(794,370)\lineto(802,375)\lineto(866,375)
\moveto(58,365)\lineto(90,365)\lineto(98,350)\lineto(194,350)\lineto(202,340)\lineto(282,340)\lineto(290,340)\lineto(370,340)\lineto(378,340)\lineto(426,340)\lineto(434,350)\lineto(458,350)\lineto(466,350)\lineto(538,350)\lineto(546,360)\lineto(586,360)\lineto(594,360)\lineto(674,360)\lineto(682,355)\lineto(746,355)\lineto(754,355)\lineto(802,355)\lineto(810,360)\lineto(810,360)\lineto(818,370)\lineto(866,370)
\moveto(58,370)\lineto(98,370)\lineto(106,370)\lineto(162,370)\lineto(170,380)\lineto(226,380)\lineto(234,380)\lineto(306,380)\lineto(314,385)\lineto(402,385)\lineto(410,385)\lineto(490,385)\lineto(498,385)\lineto(602,385)\lineto(610,380)\lineto(690,380)\lineto(698,375)\lineto(754,375)\lineto(762,365)\lineto(810,365)\lineto(818,365)\lineto(866,365)
\moveto(58,375)\lineto(98,375)\lineto(106,365)\lineto(170,365)\lineto(178,365)\lineto(210,365)\lineto(218,360)\lineto(290,360)\lineto(298,350)\lineto(378,350)\lineto(386,345)\lineto(426,345)\lineto(434,345)\lineto(450,345)\lineto(458,340)\lineto(570,340)\lineto(578,340)\lineto(642,340)\lineto(650,345)\lineto(666,345)\lineto(674,345)\lineto(738,345)\lineto(746,345)\lineto(818,345)\lineto(826,360)\lineto(866,360)
\moveto(58,380)\lineto(106,380)\lineto(114,380)\lineto(154,380)\lineto(162,390)\lineto(234,390)\lineto(242,390)\lineto(266,390)\lineto(274,400)\lineto(322,400)\lineto(330,400)\lineto(410,400)\lineto(418,400)\lineto(498,400)\lineto(506,390)\lineto(602,390)\lineto(610,375)\lineto(682,375)\lineto(690,365)\lineto(706,365)\lineto(714,360)\lineto(746,360)\lineto(754,350)\lineto(818,350)\lineto(826,355)\lineto(866,355)
\moveto(58,385)\lineto(106,385)\lineto(114,375)\lineto(162,375)\lineto(170,375)\lineto(218,375)\lineto(226,365)\lineto(290,365)\lineto(298,345)\lineto(370,345)\lineto(378,335)\lineto(442,335)\lineto(450,330)\lineto(562,330)\lineto(570,325)\lineto(650,325)\lineto(658,325)\lineto(722,325)\lineto(730,325)\lineto(834,325)\lineto(842,335)\lineto(842,335)\lineto(850,340)\lineto(850,340)\lineto(858,350)\lineto(866,350)
\moveto(58,390)\lineto(114,390)\lineto(122,385)\lineto(154,385)\lineto(162,385)\lineto(226,385)\lineto(234,375)\lineto(298,375)\lineto(306,365)\lineto(338,365)\lineto(346,360)\lineto(386,360)\lineto(394,355)\lineto(458,355)\lineto(466,345)\lineto(570,345)\lineto(578,335)\lineto(658,335)\lineto(666,335)\lineto(730,335)\lineto(738,335)\lineto(826,335)\lineto(834,345)\lineto(850,345)\lineto(858,345)\lineto(866,345)
\moveto(58,395)\lineto(122,395)\lineto(130,405)\lineto(250,405)\lineto(258,410)\lineto(330,410)\lineto(338,410)\lineto(506,410)\lineto(514,400)\lineto(610,400)\lineto(618,390)\lineto(698,390)\lineto(706,380)\lineto(754,380)\lineto(762,360)\lineto(802,360)\lineto(810,355)\lineto(818,355)\lineto(826,350)\lineto(850,350)\lineto(858,340)\lineto(866,340)
\moveto(58,400)\lineto(122,400)\lineto(130,400)\lineto(242,400)\lineto(250,395)\lineto(266,395)\lineto(274,395)\lineto(314,395)\lineto(322,390)\lineto(402,390)\lineto(410,380)\lineto(482,380)\lineto(490,375)\lineto(522,375)\lineto(530,375)\lineto(594,375)\lineto(602,365)\lineto(674,365)\lineto(682,350)\lineto(738,350)\lineto(746,340)\lineto(826,340)\lineto(834,340)\lineto(842,340)\lineto(850,335)\lineto(866,335)
\moveto(58,405)\lineto(122,405)\lineto(130,395)\lineto(234,395)\lineto(242,385)\lineto(306,385)\lineto(314,380)\lineto(354,380)\lineto(362,375)\lineto(394,375)\lineto(402,365)\lineto(466,365)\lineto(474,355)\lineto(538,355)\lineto(546,355)\lineto(578,355)\lineto(586,350)\lineto(634,350)\lineto(642,350)\lineto(666,350)\lineto(674,340)\lineto(730,340)\lineto(738,330)\lineto(834,330)\lineto(842,330)\lineto(866,330)
\moveto(58,410)\lineto(122,410)\lineto(130,390)\lineto(154,390)\lineto(162,380)\lineto(162,380)\lineto(170,370)\lineto(170,370)\lineto(178,360)\lineto(202,360)\lineto(210,350)\lineto(274,350)\lineto(282,345)\lineto(282,345)\lineto(290,335)\lineto(362,335)\lineto(370,325)\lineto(434,325)\lineto(442,315)\lineto(546,315)\lineto(554,310)\lineto(714,310)\lineto(722,315)\lineto(858,315)\lineto(866,325)\lineto(866,325)
\moveto(58,415)\lineto(130,415)\lineto(138,410)\lineto(250,410)\lineto(258,405)\lineto(322,405)\lineto(330,395)\lineto(402,395)\lineto(410,375)\lineto(474,375)\lineto(482,365)\lineto(530,365)\lineto(538,365)\lineto(586,365)\lineto(594,355)\lineto(634,355)\lineto(642,345)\lineto(642,345)\lineto(650,340)\lineto(658,340)\lineto(666,330)\lineto(722,330)\lineto(730,320)\lineto(858,320)\lineto(866,320)\lineto(866,320)
\moveto(58,420)\lineto(138,420)\lineto(146,420)\lineto(258,420)\lineto(266,420)\lineto(514,420)\lineto(522,410)\lineto(618,410)\lineto(626,400)\lineto(770,400)\lineto(778,385)\lineto(786,385)\lineto(794,375)\lineto(794,375)\lineto(802,370)\lineto(810,370)\lineto(818,360)\lineto(818,360)\lineto(826,345)\lineto(826,345)\lineto(834,335)\lineto(834,335)\lineto(842,325)\lineto(858,325)\lineto(866,315)\lineto(866,315)
\moveto(58,425)\lineto(138,425)\lineto(146,415)\lineto(250,415)\lineto(258,400)\lineto(266,400)\lineto(274,390)\lineto(306,390)\lineto(314,375)\lineto(346,375)\lineto(354,365)\lineto(386,365)\lineto(394,350)\lineto(426,350)\lineto(434,340)\lineto(442,340)\lineto(450,325)\lineto(554,325)\lineto(562,315)\lineto(714,315)\lineto(722,310)\lineto(866,310)
\moveto(58,430)\lineto(146,430)\lineto(154,425)\lineto(258,425)\lineto(266,415)\lineto(330,415)\lineto(338,405)\lineto(410,405)\lineto(418,395)\lineto(418,395)\lineto(426,390)\lineto(490,390)\lineto(498,380)\lineto(522,380)\lineto(530,370)\lineto(530,370)\lineto(538,360)\lineto(538,360)\lineto(546,350)\lineto(570,350)\lineto(578,330)\lineto(650,330)\lineto(658,320)\lineto(714,320)\lineto(722,305)\lineto(866,305)
\strokepath
\end{picture}
\end{center}
\caption{The unstretchable simplicial wirings with $16$, $17$, $18$, $19$, $22$, and $26$ lines
that satisfy Pappus'  theorem.\label{nonstwirpap}}
\end{figure}
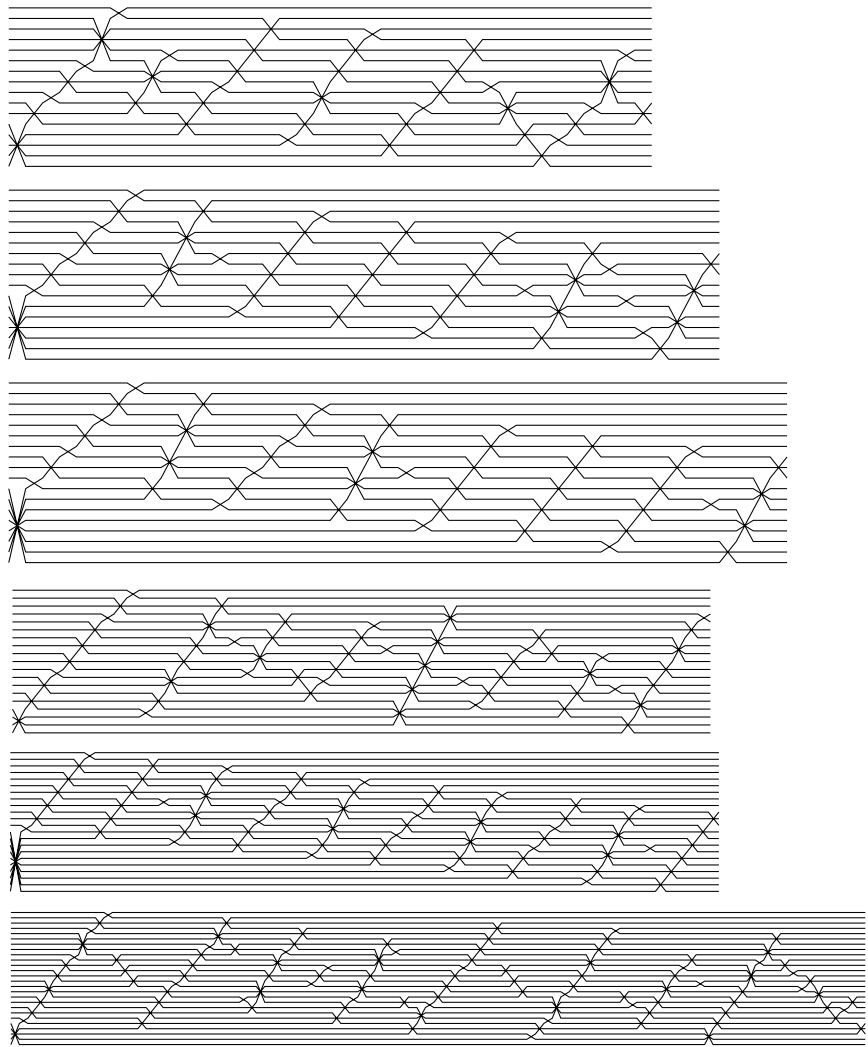

We now summarize the output of the computation.
The running time is exponential in the number of lines:
It takes only $4$ seconds to find all simplicial arrangements of pseudolines
with $20$ lines, but for $26$ lines an ordinary PC needs about two weeks.
Note that this is an implementation in {\sc C} and {\sc C++}; the resulting
data is then processed by several functions implemented in {\sc Magma}.
Notice also that using Lemma \ref{wirbeg}, it is easy to parallelize the
algorithm.
This way we also classify the simplicial arrangements with $27$ lines in
approximately one week, but to reach an enumeration of all simplicial arrangements
with $28$ lines one would require some new idea (or even more computational resources).
But just one step further would not be a great benefit.

Fig.\ \ref{pseudoiso} shows the numbers of simplicial arrangements of pseudolines
up to isomorphisms.
The entries marked ``?'' could, in principle, be computed easily but would
require a better implementation of the function collecting the
wirings up to isomorphisms or possibly just more computational resources.

Fig.\ \ref{hassediag} is a Hasse diagram of the stretchable
simplicial arrangements with up to $27$ lines.
An edge between two arrangements means that one can remove lines from the
larger one to obtain the smaller one. We have included this diagram here
because it appears to be slightly different from \cite[Fig.\ 4]{p-G-09}.
It is less well-arranged than \cite[Fig.\ 4]{p-G-09} because it includes
the connections to the infinite series $\Ac(n,1)$ and some more
edges for instance to $\Ac(7,1)$.
It appears that $\Ac(21,4)$ is not maximal; it is contained in the arrangement
$\Ac(26,4)$.

The following table is a list of the invariants of all (stretchable) simplicial
arrangements with up to $27$ lines except the near-pencil (compare \cite[pp.\ 6--10]{p-G-09}).
The numbers $f_0,f_1,f_2$ are the numbers of vertices, edges and $2$-cells;
$t_i$ is the number of vertices which lie on exactly $i$ lines;
$r_i$ is the number of lines on which exactly $i$ vertices lie.
The last column shows the automorphism groups of the cell complexes given by
the arrangements. The symbols $A_n$, $B_n$, $G_2$, $H_3$ are reflection groups of the
corresponding type, $\Dc_n$ is the dihedral group of order $2n$,
$\Alt_n$ is the alternating group, and $Z_4$ is the cyclic group of order $4$.

Notice that the pairs $(\Ac(17,2),\Ac(17,4))$, $(\Ac(18,4),\Ac(18,5))$, and $(\Ac(19,4),\Ac(19,5))$ are
not uniquely determined by their invariants.
We distinguish $\Ac(17,2)$ and $\Ac(17,4)$ by their position in the Hasse diagram.
$\Ac(19,4)$ and $\Ac(19,5))$ are not distinguishable in \cite[Fig.\ 4]{p-G-09}; however
they have different automorphism groups. The realization of $\Ac(19,5)$ in \cite{p-G-09}
reveals the existence of an automorphism of order $4$ which does not exist
for $\Ac(19,4)$.
The arrangements $\Ac(18,4)$ and $\Ac(18,5)$ are not distinguishable by their
invariants, they even have isomorphic automorphism groups.
But since their role in both Hasse diagrams is the same, the tables in \cite{p-G-09}
and the present table are consistent.

\begin{tiny}
\begin{longtable}{l|llll}
label & $(f_0,f_1,f_2)$ & $(t_2,t_3,\ldots)$ & $(r_2,r_3,\ldots)$ & $\Aut(\Ac)$ \\
\hline
(6,1) & (7,18,12) & (3,4) & (0,6) & $B_3$ \\
\hline
(7,1) & (9,24,16) & (3,6) & (0,4,3) & $B_3$ \\
\hline
(8,1) & (11,30,20) & (4,6,1) & (0,2,6) & $A_1\times B_2$ \\
\hline
(9,1) & (13,36,24) & (6,4,3) & (0,0,9) & $B_3$ \\
\hline
(10,1) & (16,45,30) & (5,10,0,1) & (0,0,5,5) & $\Dc_{10}$ \\
(10,2) & (16,45,30) & (6,7,3) & (0,0,6,3,1) & $A_1\times A_2$ \\
(10,3) & (16,45,30) & (6,7,3) & (0,1,3,6) & $A_1\times G_2$ \\
\hline
(11,1) & (19,54,36) & (7,8,4) & (0,0,4,4,3) & $A_1\times A_1\times A_1$ \\
\hline
(12,1) & (22,63,42) & (6,15,0,0,1) & (0,0,3,3,6) & $A_1\times G_2$ \\
(12,2) & (22,63,42) & (8,10,3,1) & (0,0,3,3,6) & $A_1\times A_1\times A_1$ \\
(12,3) & (22,63,42) & (9,7,6) & (0,0,3,3,6) & $A_1\times A_2$ \\
\hline
(13,1) & (25,72,48) & (9,12,3,0,1) & (0,0,3,0,10) & $A_1\times G_2$ \\
(13,2) & (25,72,48) & (12,4,9) & (0,0,3,0,10) & $B_3$ \\
(13,3) & (25,72,48) & (10,10,3,2) & (0,0,1,4,8) & $A_1\times A_1\times A_1$ \\
(13,4) & (27,78,52) & (6,18,3) & (0,0,0,0,13) & $A_1 \times \Alt_4$ \\
\hline
(14,1) & (29,84,56) & (7,21,0,0,0,1) & (0,0,0,7,0,7) & $\Dc_{14}$ \\
(14,2) & (29,84,56) & (11,12,4,2) & (0,0,1,4,4,4,1) & $A_1\times A_1$ \\
(14,3) & (30,87,58) & (9,16,4,1) & (0,0,0,0,11,3) & $A_1\times A_1$ \\
(14,4) & (29,84,56) & (10,14,4,0,1) & (0,0,0,4,6,4) & $A_1\times A_1\times A_1$ \\
\hline
(15,1) & (31,90,60) & (15,10,0,6) & (0,0,0,0,15) & $H_3$ \\
(15,2) & (33,96,64) & (13,12,6,2) & (0,0,1,4,2,4,4) & $A_1\times B_2$ \\
(15,3) & (34,99,66) & (12,13,9) & (0,0,0,0,9,3,3) & $A_1\times A_2$ \\
(15,4) & (33,96,64) & (12,14,6,0,1) & (0,0,0,0,10,4,1) & $A_1\times A_1\times A_1$ \\
(15,5) & (34,99,66) & (9,22,0,3) & (0,0,0,0,9,3,3) & $A_1\times A_2$ \\
\hline
(16,1) & (37,108,72) & (8,28,0,0,0,0,1) & (0,0,0,4,4,0,8) & $A_1 \times \Dc_8$ \\
(16,2) & (37,108,72) & (14,15,6,1,1) & (0,0,1,2,4,2,7) & $A_1\times A_1\times A_1$ \\
(16,3) & (37,108,72) & (15,13,6,3) & (0,0,0,0,10,0,6) & $A_1\times A_2$ \\
(16,4) & (36,105,70) & (15,15,0,6) & (0,0,0,0,10,5,0,0,1) & $\Dc_{10}$ \\
(16,5) & (37,108,72) & (14,16,3,4) & (0,0,0,2,4,8,0,2) & $A_1\times A_1\times A_1$ \\
(16,6) & (37,108,72) & (15,12,9,0,1) & (0,0,0,0,7,6,3) & $A_1\times A_2$ \\
(16,7) & (38,111,74) & (12,19,6,0,1) & (0,0,0,3,3,2,8) & $A_1\times A_1\times A_1$ \\
\hline
(17,1) & (41,120,80) & (12,24,4,0,0,0,1) & (0,0,0,0,8,0,9) & $A_1 \times \Dc_8$ \\
(17,2) & (41,120,80) & (16,16,7,0,2) & (0,0,1,0,6,0,10) & $A_1\times B_2$ \\
(17,3) & (41,120,80) & (18,12,7,4) & (0,0,0,0,8,0,9) & $A_1\times B_2$ \\
(17,4) & (41,120,80) & (16,16,7,0,2) & (0,0,1,0,6,0,10) & $A_1\times B_2$ \\
(17,5) & (41,120,80) & (16,18,1,6) & (0,0,0,0,6,8,1,0,2) & $A_1\times A_1\times A_1$ \\
(17,6) & (42,123,82) & (16,15,10,0,1) & (0,0,0,0,6,3,7,0,1) & $A_1\times A_1$ \\
(17,7) & (43,126,84) & (13,22,7,0,1) & (0,0,0,0,6,0,10,0,1) & $A_1\times A_1\times A_1$ \\
(17,8) & (43,126,84) & (14,20,7,2) & (0,0,0,0,1,8,8) & $A_1\times A_1\times A_1$ \\
\hline
(18,1) & (46,135,90) & (9,36,0,0,0,0,0,1) & (0,0,0,0,9,0,0,9) & $\Dc_{18}$ \\
(18,2) & (46,135,90) & (18,18,6,3,1) & (0,0,0,0,3,3,12) & $A_1\times G_2$ \\
(18,3) & (46,135,90) & (19,16,6,5) & (0,0,0,0,6,2,6,3,1) & $A_1\times A_1$ \\
(18,4) & (46,135,90) & (18,19,3,6) & (0,0,0,0,3,9,3,0,3) & $A_1\times A_2$ \\
(18,5) & (46,135,90) & (18,19,3,6) & (0,0,0,0,3,9,3,0,3) & $A_1\times A_2$ \\
(18,6) & (47,138,92) & (18,16,12,0,1) & (0,0,0,0,5,2,7,2,2) & $A_1\times A_1$ \\
(18,7) & (46,135,90) & (18,18,6,3,1) & (0,0,0,3,3,0,6,6) & $A_1\times A_2$ \\
(18,8) & (47,138,92) & (16,22,6,2,1) & (0,0,0,0,6,0,7,4,1) & $A_1\times A_1$ \\
\hline
(19,1) & (49,144,96) & (21,18,6,0,4) & (0,0,0,0,4,0,15) & $A_1\times G_2$ \\
(19,2) & (51,150,100) & (21,18,6,6) & (0,0,0,0,1,8,6,0,4) & $A_1\times A_1\times A_1$ \\
(19,3) & (49,144,96) & (24,12,6,6,1) & (0,0,0,0,4,0,15) & $A_1\times G_2$ \\
(19,4) & (51,150,100) & (20,20,6,4,1) & (0,0,0,0,4,4,4,4,3) & $A_1\times A_1\times A_1$ \\
(19,5) & (51,150,100) & (20,20,6,4,1) & (0,0,0,0,4,4,4,4,3) & $A_1\times Z_4$ \\
(19,6) & (51,150,100) & (20,20,6,4,1) & (0,0,0,0,6,0,6,4,3) & $A_1\times A_1\times A_1$ \\
(19,7) & (52,153,102) & (21,15,15,0,1) & (0,0,0,0,4,3,3,6,3) & $A_1\times A_2$ \\
\hline
(20,1) & (56,165,110) & (10,45,0,0,0,0,0,0,1) & (0,0,0,0,5,5,0,0,10) & $A_1 \times \Dc_{10}$ \\
(20,2) & (56,165,110) & (25,15,10,6) & (0,0,0,0,0,5,10,0,5) & $\Dc_{10}$ \\
(20,3) & (56,165,110) & (21,24,6,4,0,1) & (0,0,0,0,4,2,4,6,3,1) & $A_1\times A_1$ \\
(20,4) & (56,165,110) & (23,20,7,5,1) & (0,0,0,0,5,1,4,4,6) & $A_1\times A_1$ \\
(20,5) & (55,162,108) & (20,26,4,4,0,0,1) & (0,0,0,2,2,0,4,12) & $A_1\times B_2$ \\
\hline
(21,1) & (61,180,120) & (15,40,5,0,0,0,0,0,1) & (0,0,0,0,5,0,5,0,11) & $A_1 \times \Dc_{10}$ \\
(21,2) & (61,180,120) & (30,10,15,6) & (0,0,0,0,0,0,15,0,6) & $H_3$ \\
(21,3) & (61,180,120) & (24,24,9,0,4) & (0,0,0,0,6,0,3,0,12) & $B_3$ \\
(21,4) & (61,180,120) & (22,28,6,4,0,0,1) & (0,0,0,0,4,0,4,8,4,0,1) & $A_1\times B_2$ \\
(21,5) & (61,180,120) & (26,20,9,4,2) & (0,0,0,0,5,0,3,4,9) & $A_1\times A_1\times A_1$ \\
(21,6) & (63,186,124) & (25,20,15,2,1) & (0,0,0,0,1,0,11,0,8,0,1) & $A_1\times A_1\times A_1$ \\
(21,7) & (64,189,126) & (24,22,15,3) & (0,0,0,0,0,0,12,0,6,3) & $A_1\times A_2$ \\
\hline
(22,1) & (67,198,132) & (11,55,0,0,0,0,0,0,0,1) & (0,0,0,0,0,11,0,0,0,11) & $\Dc_{22}$ \\
(22,2) & (70,207,138) & (24,30,12,3,1) & (0,0,0,0,1,0,6,3,9,0,3) & $A_1\times A_2$ \\
(22,3) & (67,198,132) & (27,28,0,12) & (0,0,0,0,0,0,12,0,9,0,1) & $A_1\times A_2$ \\
(22,4) & (67,198,132) & (27,25,9,3,3) & (0,0,0,0,4,0,6,0,6,6) & $A_1\times A_2$ \\
{\bf (22,5)} & (73,216,144) & (12,58,0,0,3) & (0,0,0,0,0,0,0,12,6,0,4) & $B_3$ \\
\hline
(23,1) & (75,222,148) & (27,32,10,4,2) & (0,0,0,0,1,0,6,2,7,4,3) & $A_1\times A_1$ \\
{\bf (23,2)} & (77,228,152) & (16,56,2,0,1,2) & (0,0,0,0,0,0,1,8,10,0,4) & $A_1\times B_2$ \\
\hline
(24,1) & (79,234,156) & (12,66,0,0,0,0,0,0,0,0,1) & (0,0,0,0,0,6,6,0,0,0,12) & $A_1 \times \Dc_{12}$ \\
(24,2) & (77,228,152) & (32,32,0,12,0,0,1) & (0,0,0,0,0,4,0,0,20) & $A_1 \times \Dc_8$ \\
(24,3) & (80,237,158) & (31,32,9,5,3) & (0,0,0,0,1,0,6,1,6,6,4) & $A_1\times A_1$ \\
{\bf (24,4)} & (81,240,160) & (20,54,4,0,0,2,1) & (0,0,0,0,0,0,2,4,14,0,4) & $A_1\times B_2$ \\
\hline
(25,1) & (85,252,168) & (18,60,6,0,0,0,0,0,0,0,1) & (0,0,0,0,0,0,12,0,0,0,13) & $A_1 \times \Dc_{12}$ \\
(25,2) & (85,252,168) & (36,28,15,0,6) & (0,0,0,0,4,0,3,0,6,0,12) & $B_3$ \\
(25,3) & (91,270,180) & (30,40,15,6) & (0,0,0,0,0,0,0,0,15,0,10) & $H_3$ \\
(25,4) & (85,252,168) & (36,30,9,6,4) & (0,0,0,0,1,0,9,0,3,0,12) & $A_1\times G_2$ \\
(25,5) & (81,240,160) & (36,32,0,8,4,0,1) & (0,0,0,0,0,0,5,0,20) & $A_1 \times \Dc_8$ \\
(25,6) & (85,252,168) & (36,30,9,6,4) & (0,0,0,0,1,0,6,0,6,6,6) & $A_1\times A_2$ \\
(25,7) & (85,252,168) & (33,34,12,2,3,0,1) & (0,0,0,0,2,0,4,4,4,0,11) & $A_1\times A_1\times A_1$ \\
{\bf (25,8)} & (85,252,168) & (24,52,6,0,0,0,3) & (0,0,0,0,0,0,3,0,18,0,4) & $B_3$ \\
\hline
(26,1) & (92,273,182) & (13,78,0,0,0,0,0,0,0,0,0,1) & (0,0,0,0,0,0,13,0,0,0,0,13) & $\Dc_{26}$ \\
(26,2) & (96,285,190) & (35,40,10,11) & (0,0,0,0,0,0,0,0,11,5,10) & $\Dc_{10}$ \\
(26,3) & (92,273,182) & (37,36,9,6,3,1) & (0,0,0,0,1,0,7,2,2,1,8,4,1) & $A_1\times A_1$ \\
(26,4) & (92,273,182) & (35,39,10,4,3,0,1) & (0,0,0,0,1,1,4,4,2,2,7,4,1) & $A_1\times A_1$ \\
\hline
(27,1) & (101,300,200) & (40,40,6,14,1) & (0,0,0,0,0,0,0,0,8,8,11) & $A_1\times A_1\times A_1$ \\
(27,2) & (99,294,196) & (39,40,10,6,2,2) & (0,0,0,0,1,0,5,4,1,2,4,8,2) & $A_1\times A_1$ \\
(27,3) & (99,294,196) & (39,40,10,6,2,2) & (0,0,0,0,1,0,6,2,2,2,5,6,3) & $A_1\times A_1$ \\
(27,4) & (99,294,196) & (38,42,9,6,3,0,1) & (0,0,0,0,1,0,5,4,2,0,7,4,4) & $A_1\times A_1\times A_1$ \\
\end{longtable}
\end{tiny}

The next table contains the invariants for the unstretchable simplicial arrangements
that satisfy Pappus' theorem. Corresponding wirings are displayed in
Fig.\ \ref{nonstwirpap}.\\

\begin{tiny}
\begin{tabular}{l|llll}
lines & $(f_0,f_1,f_2)$ & $(t_2,t_3,\ldots)$ & $(r_2,r_3,\ldots)$ & $\Aut(\Ac)$ \\
\hline
16 & (38,111,74) & (12,20,3,3) & (0,0,0,0,5,7,4) & $A_1\times A_1$ \\
17 & (42,123,82) & (13,22,6,0,0,1) & (0,0,0,3,1,4,7,2) & $A_1\times A_1$  \\
18 & (46,135,90) & (14,25,6,0,0,0,1) & (0,0,0,3,1,2,8,4) & $A_1\times A_1\times A_1$ \\
19 & (55,162,108) & (15,28,12) & (0,0,0,0,1,0,12,0,6) & $A_1\times A_2$ \\
22 & (67,198,132) & (18,40,8,0,0,0,0,0,1) & (0,0,0,0,0,6,4,0,8,4) & $A_1\times A_1\times A_1$ \\
26 & (101,300,200) & (25,60,10,6) & (0,0,0,0,0,0,0,0,6,0,20) & $\Dc_{10}$
\end{tabular}
\end{tiny}

\evensidemargin8mm

\begin{figure}
\begin{center}
\includegraphics[trim = 0mm 16mm 0mm 16mm,clip,width=1.2\textwidth]{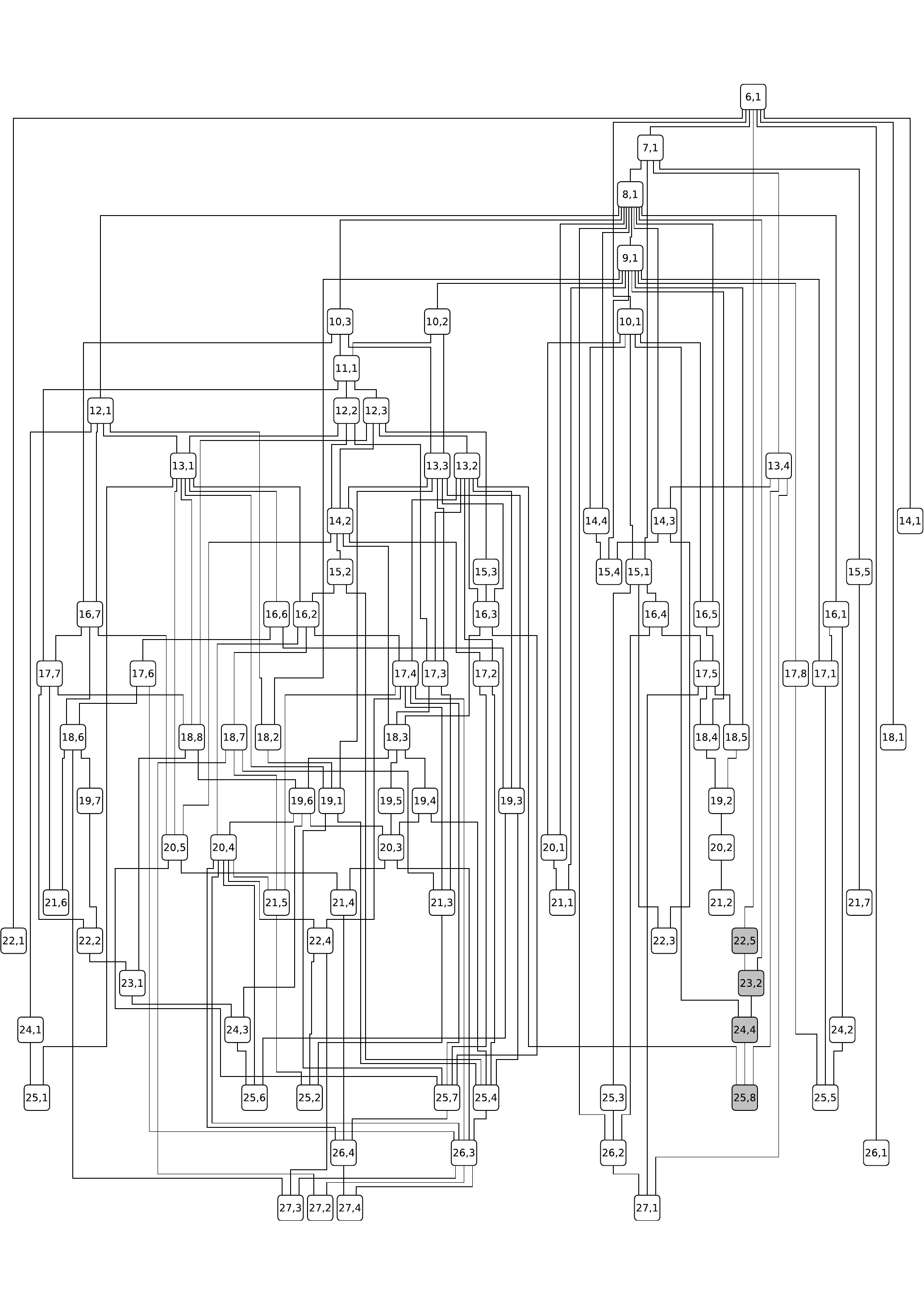}
\end{center}
\caption{A Hasse diagram (compare \cite[Fig.\ 4]{p-G-09})\label{hassediag}}
\end{figure}

\newcommand{\etalchar}[1]{$^{#1}$}
\providecommand{\bysame}{\leavevmode\hbox to3em{\hrulefill}\thinspace}
\providecommand{\MR}{\relax\ifhmode\unskip\space\fi MR }
\providecommand{\MRhref}[2]{%
  \href{http://www.ams.org/mathscinet-getitem?mr=#1}{#2}
}
\providecommand{\href}[2]{#2}

\end{document}